\newcolumntype{C}{>{\Centering\arraybackslash}m{0.14\linewidth}}
\newcommand{\bp}{\backprime}
\newcommand{\veps}{\varepsilon}
\numberwithin{equation}{section}
\theoremstyle{plain}
\newtheorem{theorem}{Theorem}[section]
\newtheorem{lemma}[theorem]{Lemma}
\theoremstyle{definition}
\newtheorem*{defi*}{Definition} 
\theoremstyle{remark}
\newtheorem{remark}{Remark}
\theoremstyle{remark}
\let\alignts@preamble\align@preamble
\patchcmd{\alignts@preamble}{\displaystyle}{\textstyle}{}{}
\patchcmd{\alignts@preamble}{\displaystyle}{\textstyle}{}{}
\def\alignts{\let\align@preamble\alignts@preamble\start@align\@ne\st@rredfalse\m@ne}
\title[$C^1$ blow up for the Euler-Poisson System]{Formation of singularities in plasma ion dynamics}
\author[J. Bae]{Junsik Bae}
\address[JB]{Mathematics Division, National Center for Theoretical Sciences, No. 1, Sec. 4, Roosevelt Rd., Taipei 10617, Taiwan}
\email{jsbae@ncts.ntu.edu.tw}
\author[J. Choi]{Junho Choi}
\address[JC]{Institute of Basic Science, Sungkyunkwan University, Gyeonggi-do, 16419, Korea}
\email{junho.choi@skku.edu}
\author[B. Kwon]{Bongsuk Kwon}
\address[BK]{Department of Mathematical Sciences, Ulsan National Institute of Science and Technology, Ulsan, 44919, Korea}
\email{bkwon@unist.ac.kr}
\date{\today}
\subjclass{Primary: 	35Q35, 35L67,  Secondary:	35Q31, 76N30}
\begin{document}
 
\maketitle 

\begin{abstract}
We study the formation of singularity for the  Euler-Poisson system equipped with the Boltzmann relation, which describes the dynamics of ions in an electrostatic plasma. 
In general, it is known that smooth solutions to nonlinear hyperbolic equations fail to exist globally in time. We establish criteria for $C^1$ blow-up of the  Euler-Poisson system, both for the isothermal and pressureless cases. In particular, our blow-up condition for the presureless model does not require that the gradient of velocity is negatively large. In fact, our result particularly implies that the smooth solutions can break down even if the gradient of initial velocity is trivial. For the isothermal case, we prove that smooth solutions leave $C^1$ class in a finite time when the gradients of the Riemann functions are initially large. 
%

\noindent{\it Keywords}:
Euler-Poisson system; Boltzmann relation; Cold ion; Warm ion; Singularity
\end{abstract}

\section{Introduction}

We consider the one-dimensional Euler-Poisson system  in a non-dimensional form:
\begin{subequations}\label{EP}
\begin{align}[left = \empheqlbrace\,]
& \rho_t +  (\rho u)_x = 0, \label{EP_1} \\ 
& \rho(u_t  + u u_x) + K\rho_x = -  \rho\phi_x, \label{EP_2} \\
& - \phi_{xx} = \rho - e^\phi. \label{EP_3} 
\end{align}
\end{subequations} 
Here $\rho>0$, $u$ and $\phi$ are the unknown functions of $(x,t) \in \mathbb{R}\times \mathbb{R}^+$ representing the ion density, the fluid velocity for ions, and  the electric potential, respectively. $K = T_i/T_e \geq  0$ is a constant of the ratio of the ion temperature $T_i$ to the electron temperature $T_e$. The system \eqref{EP} is referred to as the \textit{isothermal} model when $K>0$, and the \textit{pressureless} model when $K=0$, respectively.

The Euler-Poisson system \eqref{EP} is a fundamental fluid model describing the dynamics of ions in an electrostatic plasma \cite{Ch,Dav,Pecseli}, and   it is often employed to study various phenomena of plasma such as plasma sheaths \cite{hk, suzuki} and plasma solitons \cite{BK2,BK,HS}. Especially, to study plasma waves, the limit problems seeking the  connections with some well-known dispersive models have been investigated, for instance, KdV limit \cite{BK2, Guo,HNS,LS}, KP-II and Zakharov-Kuznetsov limits \cite{LLS, Pu}, and NLS limit \cite{PuNLS}.  

The Euler-Poisson system \eqref{EP} is  the one-fluid model of ions, where the electron density $\rho_e$ is assumed to satisfy the \textit{Boltzmann relation}
\begin{equation*}\label{Boltzmann}
\rho_e=e^{\phi}.
\end{equation*}
Based on the physical fact that 
the electron mass $m_e$ is much lighter than the ion mass $m_i$, i.e, $m_e/m_i \ll 1$, 
the relation  can be formally derived from the two-fluid model of ions and electrons by suppressing the constant of electron mass $(m_e=0)$, referred to as the \textit{massless electron} assumption.
We refer to \cite{Ch} for more details of physicality and derivation, and also to \cite{GGPS} for a mathematical justification of the massless electron limit.


Due to the nature of electrically reactive fluids, plasmas exhibit unique phenomena different from the usual gas.
Correspondingly, the Euler-Poisson system \eqref{EP}, where electrical effect is described by the Poisson equation with the Boltzmann relation, exhibits interesting and rich dynamics, significantly different from that of the compressible Euler equations. One of the most interesting feathures is that \eqref{EP} admits special types of  solutions such as traveling solitary waves, \cite{Cor,LS,Sag}, whose   linear stability has been studied in \cite{HS} and \cite{BK} for the pressureless case and for the isothermal case, respectively. 
As far as existence of smooth solutions is concerned, while in general smooth solutions to nonlinear hyperbolic equations fail to exist globally in time,  it is interesting  that this special solution can persist globally.
A question of global existence or finite time blow-up of smooth solutions naturally arises in the study of large-time dynamics of the Euler-Poisson system \eqref{EP}, including nonlinear stability of the solitary waves.

In the present paper, we investigate formation of singularities for the 1D Euler-Poisson system \eqref{EP}. For the isothermal case, i.e., $K>0$, we show that smooth solutions to \eqref{EP}  develop $C^1$ blow-up in a finite time when the gradients of Riemann functions are initially large. When the blow-up occurs, we find that the density and velocity stay bounded, while their derivatives blow up. For the pressureless case, i.e., $K=0$, we propose a condition  for formation of singularities, requiring no largeness of the gradient of velocity. It is known that if the initial velocity has negatively large gradient at some point, the smooth solution to the pressureless system \eqref{EP} leaves  $C^1$ class in a finite time, \cite{Liu}.
In contrast, our condition does not require the large gradient of the initial velocity. In particular, our result demonstrates that the density and the derivative of velocity blow up even if the initial velocity has trivial gradient. In fact, it is the electric potential that induces development of singularities. 
For instance, when the initial local density is sufficiently lower than the background density, i.e., ion density is sufficiently rarefied, the electrostatic potential is determined by the distribution of ions in a way that the fluid momentum with negative gradient is generated at later times, resulting in the finite-time singularity formation. We refer to  \cite{PHGOA} for a relevant numerical study for the pressureless Euler-Poisson system. 
We present several numerical experiments supporting our results in Section~\ref{numerical},
where we also provide numerical examples showing that the pressureless model and the isothermal model exhibit the radically different behaviors in the solutions, see Table \ref{Table2}.

In the literature of plasma physics, the isothermal Euler-Poisson system is the most common and important. Yet the pressureless Euler-Poisson system, i.e.,  \eqref{EP} with $K=0$, is often considered as a simplified model for ions in a certain physical situation where the ion temperature $T_i$ is much smaller than the electron temperature $T_e$.
In other words, the pressureless Euler-Poisson system  is an ideal model for \textit{cold ions} (a plasma with  $T_i/T_e \ll 1$).
From a mathematical point of view, the pressureless model is weakly coupled (the hyperbolic part is decoupled) so that one can exploit its simpler structure in the analysis. However,  the presence of the pressure makes the hyperbolic part of \eqref{EP} strongly coupled, which makes it harder to mathematically analyze.  Not suprisingly, properties of solutions to the isothermal model are significantly different from those to the pressureless model in certain regimes. We shall discuss these issues in detail, in  particular, in terms of examples of the blow-up solutions and solitary waves in Section~\ref{numerical}.

To the best of our knowledge, there is no result on the global well-posedness of smooth solutions to the Euler--Poisson system with the Boltzmann relation for the 1D and 2D cases. In fact, global existence of weak entropy solutions for the 1D isothermal case is proved in \cite{CP}. For the 3D isothermal case, smooth irrotational flows can exists globally in time, \cite{GP}. We remark that 
our numerical experiments demonstrate that some smooth solutions converge to a background constant state $(\rho, u, \phi)=(1,0,0)$ as time goes by, 
see Figure~\ref{Fig4}. 
If the smooth solution exists globally in time, one can further ask whether the solution scatters to the constant state. This can be conjectured by the dispersion relation of the associated linear system, 
\begin{equation*}\label{dispersion}
\omega(\xi)
=  \pm i \xi \sqrt{K+ \frac{1}{1+\xi^2}}.
\end{equation*}
The questions of global existence of smooth solutions and their long time behavior are intriguing and challenging  since the system is \textit{weakly dispersive}.

\subsection{Main results}

We consider the Euler-Poisson system \eqref{EP} around a constant state, i.e.,
\begin{equation}\label{Farfield1}
(\rho,u,\phi)(x,t) \to (1,0,0) \quad \text{as } |x| \to \infty.
\end{equation}
We remark that any constant state $(\rho_*, u_*, \phi_*)$ can be normalized into $(\rho_*, u_*, \phi_*)= (1,0,0)$ due to the Galilean transformation for the velocity, normalization of density, and eletrostatic potential reference determined by the density  $\phi_* = \ln \rho_*$ with $\rho_*\ne 0$.

The system \eqref{EP}--\eqref{Farfield1} admits a unique smooth solution locally in time for sufficiently smooth initial data, see \cite{LLS}.\footnote{For instance, $(\rho_0-1,u_0)\in H^2(\mathbb{R})\times H^3(\mathbb{R})$ when $K=0$, and $(\rho_0-1,u_0)\in H^2(\mathbb{R})\times H^2(\mathbb{R})$ when $K>0$.} Furthermore, as long as the smooth solution exists, the energy 
\begin{equation}\label{H-def}
H(t):= \int_\mathbb{R} \frac{1}{2}\rho u^2 + P(\rho)  + \frac12 |\partial_x\phi|^2 +(\phi-1)e^\phi + 1 \,dx,
\end{equation}
where 
\begin{equation*}
P(\rho):=K(\rho\ln\rho - \rho + 1), \quad (K \geq 0),
\end{equation*}
is conserved, that is,
\begin{equation}\label{EnergyConser}
H(t)=H(0).
\end{equation}
Here we note that when $K>0$, the {\it{relative pressure}}  $P(\rho)$ verifies that
\begin{equation}\label{pos_pressure}
P(\rho)  >0 \text{ for } \rho \in (0,1)\cup (1,\infty),
\end{equation} 
and that $P(1)=P'(1)=0$ and $P''(\rho)= K\rho^{-1}>0$.

\subsubsection{Isothermal Case}
To state our first theorem, we introduce the Riemann functions \cite{Rie} associated with the isothermal Euler equations:
\begin{subequations}\label{RI} 
\begin{align}
& r = r(\rho,u) := u  + \int_1^\rho \frac{\sqrt{p'(\xi)}}{\xi}\,d\xi = u + \sqrt{K}\ln \rho,  \\
& s = s(\rho,u) := u -  \int_1^\rho \frac{\sqrt{p'(\xi)}}{\xi}\,d\xi = u - \sqrt{K}\ln \rho,
\end{align}
\end{subequations}
where $p(\rho)$ be the pressure term in \eqref{EP}, i.e., $p(\rho) := K \rho$. 
We note that the solution to \eqref{EP}--\eqref{Farfield1} satisfies that 
\begin{equation}\label{RI-infty}
( r, s )(x,t) \to (0, 0) \quad \text{as } |x| \to \infty.
\end{equation}
In what follows, let $(r_0, s_0)(x) := (r,s)(x,0)$. 
\begin{theorem}[Isothermal case, $K>0$]\label{MainThm_Warm}
For any given positive numbers $T_0$ and $\veps$, there exist $\delta_0(T_0,\veps) = \delta_0 \in (0,\veps)$ and $M(T_0,\delta_0)=M>0$ such that for all $\delta\in(0,\delta_0)$, the following statement holds: if 
\begin{subequations}\label{Thm_Con} 
\begin{align}
& \sup_{x\in\mathbb{R}} |\rho_0(x)-1| \leq \delta ,  \label{Thm_Con_1}\\
& \sup_{x\in\mathbb{R}}|u_0(x)| \leq  \delta, \label{Thm_Con_2}\\
& H(0) \leq \delta,  \label{Thm_Con_3}\\
& -\rho_0^{-1/2}(x)\partial_x r_{0}(x) \geq M \text{ or }  -\rho_0^{-1/2}(x)\partial_x s_{0}(x) \geq M \quad \text{for some } x\in\mathbb{R}, \label{Thm_Con_4}
\end{align}
\end{subequations} 
then the maximal existence time $T_\ast$ of the classical solution to the isothermal Euler-Poisson system \eqref{EP}  satisfying \eqref{Farfield1} does not exceed $T_0$.
Moreover, it holds that 
\begin{equation}\label{gradient-blowup-u}
    \| ( \partial_x  \rho , \partial_x u) (\cdot,t) \|_{L^\infty(\mathbb{R})}    \nearrow  \infty \quad \text{ as } t\nearrow T_\ast 
\end{equation}
while 
\begin{equation}\label{rs-l-infty-bd}
 \sup_{t\in[0, T_\ast)} \| ( \rho , u, \phi, \partial_x \phi, \partial_x^2 \phi ) (\cdot,t) \|_{L^\infty(\mathbb{R})} <\infty. 
\end{equation}
\end{theorem}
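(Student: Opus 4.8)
The plan is to diagonalize the hyperbolic part of \eqref{EP} with the Riemann functions and to follow the evolution of their spatial derivatives along the two characteristic families, isolating a Riccati-type ODE that forces blow-up. Writing $u = (r+s)/2$ and $\sqrt{K}\ln\rho = (r-s)/2$, a direct computation from \eqref{EP_1}--\eqref{EP_2} gives the transport equations
\[
r_t + \lambda_+ r_x = -\phi_x, \qquad s_t + \lambda_- s_x = -\phi_x, \qquad \lambda_\pm := u \pm \sqrt{K},
\]
so the Poisson forcing $-\phi_x$ is the only coupling between the two families. Differentiating in $x$, introducing the weighted derivatives $y := \rho^{-1/2}\partial_x r$ and $z := \rho^{-1/2}\partial_x s$, and writing $D_\pm := \partial_t + \lambda_\pm\partial_x$, I expect the weight $\rho^{-1/2}$ — precisely the one in \eqref{Thm_Con_4} — to be tuned so that the quadratic cross term $\partial_x r\,\partial_x s$ and the term linear in $\phi_x$ both cancel. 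Using $-\phi_{xx} = \rho - e^\phi$ from \eqref{EP_3}, this should produce the closed scalar equations
\[
D_+ y = -\tfrac{1}{2}\sqrt{\rho}\,y^2 + \rho^{-1/2}(\rho - e^\phi), \qquad D_- z = -\tfrac{1}{2}\sqrt{\rho}\,z^2 + \rho^{-1/2}(\rho - e^\phi).
\]

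Before exploiting these, I first establish \eqref{rs-l-infty-bd}; this is the main obstacle, since the conserved energy $H$ controls only integral quantities. From \eqref{EnergyConser}, \eqref{pos_pressure}, and the convexity of $(\phi-1)e^\phi+1$ near $\phi=0$, the bound $H(0)\le\delta$ gives $\|\phi\|_{H^1}\lesssim\sqrt{\delta}$ and $\|\rho-1\|_{L^2}\lesssim\sqrt{\delta}$, provided $\rho,\phi$ remain in a bounded range — an a priori assumption I close by a continuity argument. The Poisson equation $\phi_{xx}=e^\phi-\rho$ then controls $\|\phi_{xx}\|_{L^2}\lesssim\sqrt{\delta}$, and interpolation yields $\|\phi_x\|_{L^\infty}\lesssim\sqrt{\delta}$ uniformly in time, since energy conservation precludes any temporal growth. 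Integrating $D_+ r = -\phi_x$ along $\lambda_+$-characteristics and $D_- s = -\phi_x$ along $\lambda_-$-characteristics over $[0,t]\subset[0,T_0]$ gives $\|r\|_{L^\infty},\|s\|_{L^\infty}\lesssim\delta+T_0\sqrt{\delta}$, which for $\delta$ small keeps $u=(r+s)/2$ and $\rho=\exp((r-s)/(2\sqrt{K}))$ within $\veps$ of the background. This recovers the a priori assumption with improved constants, closing the bootstrap and giving \eqref{rs-l-infty-bd} on the whole existence interval; in particular $\rho$ stays bounded below, so $\rho^{-1/2}$ is well defined.

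With these bounds, along the relevant characteristic one has $\sqrt{\rho}\ge c>0$ and $|\rho^{-1/2}(\rho-e^\phi)|\le C$ for constants depending only on $\veps$. Suppose the first alternative in \eqref{Thm_Con_4} holds at some $x_0$, i.e.\ $y(x_0,0)\le -M$. Tracking $y$ along the forward $\lambda_+$-characteristic from $x_0$, the Riccati equation gives $D_+ y \le -\tfrac{c}{2}y^2 + C$, and comparison with $\dot{Y}=-\tfrac{c}{2}Y^2+C$, $Y(0)=-M$, shows $y\to-\infty$ no later than a time $\tau(M)$ with $\tau(M)\to 0$ as $M\to\infty$; choosing $M$ large (depending on $T_0$ and on $c,C$, hence on $\delta_0$) forces $\tau(M)<T_0$. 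The second alternative is handled symmetrically using $z$ along a $\lambda_-$-characteristic. Since $\rho$ remains bounded below, $\partial_x r=\sqrt{\rho}\,y\to-\infty$ (resp.\ $\partial_x s\to-\infty$), and from $\partial_x r=\partial_x u+\sqrt{K}\,\partial_x\rho/\rho$ with $\rho$ bounded we conclude $\|(\partial_x\rho,\partial_x u)(\cdot,t)\|_{L^\infty}\nearrow\infty$ as $t\nearrow T_\ast\le\tau(M)<T_0$, which is \eqref{gradient-blowup-u}; the lower-order quantities stay bounded by \eqref{rs-l-infty-bd}.
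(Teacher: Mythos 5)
Your proposal is correct and follows essentially the same route as the paper: the same $\rho^{-1/2}$-weighted derivatives of the Riemann functions satisfying a Riccati equation with forcing $\rho^{-1/2}(e^\phi-\rho)$, the same energy-based uniform bounds on $\phi$ and $\phi_x$ closed by a continuity argument keeping $\rho$ within $O(\veps)$ of $1$, and the same Riccati comparison forcing blow-up before $T_0$ once $M$ is large. The only cosmetic differences are that you track a single characteristic rather than the Lipschitz maximum functions $F^+,G^+$ used in the paper, and that you obtain $\|\phi_x\|_{L^\infty}\lesssim\delta^{1/2}$ by interpolation through $\|\phi_{xx}\|_{L^2}$ rather than by the direct integration in Lemma~\ref{Lemma_P1}.
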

Theorem \ref{MainThm_Warm} indicates that smooth solutions to the isothermal model \eqref{EP} develop $C^1$ blow-up in a finite time when the initial state $(\rho_0,u_0)$ {\it{near the electrically neutral regime}} has relatively large gradient.
 In fact,  the condition \eqref{Thm_Con_3} with small $\delta>0$, i.e.,  $H(0)$ being small, implies $\phi\approx 0$ initially (see Lemma \ref{phi-bd}). 
We note that $H(0)$ is controlled by $\| (\rho_0-1, u_0)\|_{L^2}$ due to the elliptic estimates for the Poisson equation \eqref{EP_3} (see Section \ref{Appen1}): 
\begin{equation}\label{EnergyBd-0}
0 \leq H(0) \leq \frac{\sup_{x \in \mathbb{R}}\rho_0}{2} \int_{\mathbb{R}} |u_0|^2\,dx + (\frac{1}{\kappa_0}+C\delta) \int_{\mathbb{R}} |\rho_0-1|^2\,dx,
\end{equation}
where
 $\kappa_0:= (1-\inf\rho_0)/(-\log \inf\rho_0)$.
When the singularity occurs, we find that the density and velocity stay bounded, while their derivatives blow up. This is one of the interesting features when the pressure is present, while the pressureless case exhibits the blow-up of $L^\infty$ norm of the density. We present some numerical experiments supporting our result in Section~\ref{numerical}, see Figure \ref{Fig3} and \ref{Fig2}. 



Along the characteristics associated with the distinct eigenvalues of the hyperbolic part of \eqref{EP},
\begin{equation}\label{Eigen}
\lambda^+ = \lambda^+(\rho,u) :=  u + \sqrt{K}, \quad \lambda^- = \lambda^-(\rho,u) :=  u - \sqrt{K},
\end{equation}
the corresponding Riemann functions \eqref{RI} satisfy
\begin{equation}\label{RI_1}
r' =  -\phi_x, \quad s^\bp  =   -\phi_x,
\end{equation}
where 
\[
' := \partial_t + \lambda^+ \partial_x, \quad  ^\bp :=\partial_t + \lambda^- \partial_x,
\] 
respectively.
Following the elegant calculation of Lax \cite{Lax}, we obtain that
\begin{subequations}\label{RI_1a}
\begin{align}
& (-\rho^{-1/2}r_x)' - \rho^{1/2}\frac{(\rho^{-1/2}r_x)^2}{2} =    \rho^{-1/2}\phi_{xx} = \rho^{-1/2}(e^\phi-\rho), \\
& (-\rho^{-1/2}s_x)^\bp - \rho^{1/2} \frac{(-\rho^{-1/2}r_x)^2}{2} = \rho^{-1/2}\phi_{xx} = \rho^{-1/2}(e^\phi-\rho). 
\end{align}
\end{subequations} 
For the Euler equations,
it is a well known result of \cite{Lax} that if $r_x$ or $s_x$ is initially negative at some point, $\rho_x$ and $u_x$ will blow up in a finite time.
However,  for the case of \eqref{EP}, where the non-local effect due to the Poisson equation comes into play, the Riemann functions are not conserved along the characteristics so that the forementioned blow-up analysis for the Euler equations  is no longer applicable. 
To resolve this issue, we borrow the idea developed in \cite{Daf1} to keep track of the time-evolution of the $C^1$ norms of the Riemann functions along the characteristics. A similar approach is also adopted in \cite{Daf2,DH,WC}, and we refer to \cite{WC} for the Euler-Poisson system with heat diffusion and damping relaxation, which governs  electron dynamics with a fixed background ion. 

In our analysis,  we  obtain  the uniform bounds for $\phi$ and $\phi_x$  by making use of the energy conservation. More precisely, we first show that the amplitude of $\phi$ is bounded \textit{uniformly in $x$ and $t$} as long as the smooth solution exists (Lemma \ref{phi-bd}) and that this uniform bound can be controlled only by the size of initial energy $H(0)$. With the aid of the convexity of $P(\rho)$, this fact further implies that the uniform bound for $\phi_x$ is also controlled by the initial energy $H(0)$ (Lemma \ref{Lemma_P1}). We remark that in contrast to the proof of Lemma \ref{phi-bd}, the proof of Lemma \ref{Lemma_P1} relies on the fact that $K>0$.

\subsubsection{Pressureless Case}
To state our second theorem, let us define a function $V_-: (-\infty,0] \to [0,\infty)$ by 
\[
V_-(z):= \int_z^0 \sqrt{2\left((\tau-1)e^\tau + 1\right)}\,d\tau \; \text{ for }  z \in (-\infty,0].
\] 
By inspection, we see that  $V_-$ is well-defined since $(\tau-1)e^\tau + 1$ is nonnegative, it is strictly decreasing in $(-\infty,0]$, and  it has the inverse function $V_-^{-1}:[0,+\infty) \to (-\infty,0]$.
\begin{theorem}[Presssureless case, $K=0$]\label{MainTheorem}
For the initial data satisfying
\begin{equation}\label{ThmCon2}
exp\left(V_-^{-1}(H(0))\right) > 2\rho_0(\alpha) \text{ for some } \alpha \in \mathbb{R},
\end{equation}
 the maximal existence time $T_*$ of the classical solution to the pressureless Euler-Poisson system \eqref{EP} satisfying \eqref{Farfield1} is finite. In particular, it holds that
\[
\lim_{t \nearrow T_\ast}\sup_{x \in \mathbb{R}}\rho(x,t) = +\infty \quad \text{ and } \quad \inf_{x \in \mathbb{R}}u_x(x,t) \approx \frac{1}{t-T_\ast}
\]
for all $t<T_\ast$ sufficiently close to $T_\ast$.
\end{theorem}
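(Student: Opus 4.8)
The plan is to follow the single hyperbolic characteristic $\dot x = u$ emanating from the rarefied point $\alpha$ and to reduce the whole system, along this curve, to a scalar forced oscillator for the specific volume $m := 1/\rho$. Writing ${}' = \partial_t + u\partial_x$, the pressureless momentum equation \eqref{EP_2} (divided by $\rho>0$) gives $u' = -\phi_x$, and differentiating in $x$ and inserting \eqref{EP_3} yields $(u_x)' = \rho - e^\phi - (u_x)^2$. Combining this with $\rho' = -\rho u_x$ produces the clean identity
\begin{equation*}
m'' + e^{\phi} m = 1, \qquad m = 1/\rho ,
\end{equation*}
a linear oscillator with constant forcing whose (nonlocal) frequency is $e^{\phi}$ evaluated along the characteristic. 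Finite-time blow-up of $\rho$ is exactly the event $m \searrow 0$, so the entire theorem becomes a statement about when and how fast $m$ reaches zero.

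Next I would extract from the conserved energy \eqref{EnergyConser} a two-sided, space–time uniform bound on the potential, which is where $V_-$ enters. Since $P(\rho)=0$ and the kinetic term is nonnegative, one has $\tfrac12\phi_x^2 + g(\phi) \le H(0)$ with $g(\phi):=(\phi-1)e^{\phi}+1 \ge 0$. The elementary inequality $\tfrac12\phi_x^2 + g(\phi) \ge \sqrt{2g(\phi)}\,|\phi_x|$, integrated from $\pm\infty$ to an extremal point of $\phi$, gives $V_-(\min_x\phi) \le H(0)$ together with a symmetric upper estimate, hence
\begin{equation*}
c_0 := \exp\!\big(V_-^{-1}(H(0))\big) \le e^{\phi(x,t)} \le C_0
\end{equation*}
uniformly up to the maximal time; this is the content of Lemma \ref{phi-bd}. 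The hypothesis \eqref{ThmCon2} reads precisely $\rho_0(\alpha) < c_0/2$, i.e. $m(0) = 1/\rho_0(\alpha) > 2/c_0$: the initial specific volume exceeds twice the largest possible equilibrium $1/c_0$ of the oscillator, which is exactly the drive needed for the first downswing to overshoot zero.

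The heart of the argument is to show $m$ must vanish in finite time. The difficulty is that the frequency $e^{\phi}$ is nonlocal and its time-derivative is uncontrolled, so no conserved oscillator energy is available. Instead I would compare, on the first half-period only, with the constant-coefficient oscillator $\bar m'' + c_0\bar m = 1$ sharing the same initial data. While $m \ge 0$ one has $e^{\phi}m \ge c_0 m$, so $m'' \le 1 - c_0 m$ and $w := m - \bar m$ satisfies $w'' + c_0 w \le 0$ with $w(0)=w'(0)=0$; by the variation-of-constants formula, whose kernel $\sin(\sqrt{c_0}(t-s))$ is nonnegative on $[0,\pi/\sqrt{c_0}]$, one gets $w \le 0$, i.e. $m \le \bar m$ there. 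Since $\bar m(\pi/\sqrt{c_0}) = 2/c_0 - m(0) < 0$, the barrier $\bar m$, and hence $m$, crosses zero at some $T_* < \pi/\sqrt{c_0}$, so $\rho \nearrow \infty$ and the maximal time is finite.

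Finally, for the rates I would study $m$ near its first zero $T_*$. As $m\to 0$ the equation gives $m''\to 1$, so a transversal crossing $m'(T_*)<0$ forces $m(t)\sim |m'(T_*)|(T_*-t)$, whence $\rho = 1/m \sim c/(T_*-t)$ and, crucially, $u_x = m'/m \sim -1/(T_*-t) = 1/(t-T_*)$ with universal coefficient one. Transversality (excluding a tangential quadratic contact, which would instead give the competing rate $2/(t-T_*)$) I would secure by sandwiching $m$ from below by the stronger-frequency comparison oscillator $\underline m'' + C_0\underline m = 1$ on its first half-period, yielding $m \ge \underline m$ and a definite negative slope at the crossing. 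The statement for $\inf_x u_x$ then follows by combining the characteristic rate with the universal one-sided Riccati bound $(u_x)' \ge -C_0 - (u_x)^2$ valid along every characteristic (since $\rho>0$ and $e^{\phi}\le C_0$), which prevents $u_x$ from blowing up faster than $1/(t-T_*)$ anywhere. I expect the variable-coefficient comparison showing $m$ reaches zero, and the transversality needed to pin the exact constant in the rate, to be the principal technical hurdles.
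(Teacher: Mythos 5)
Your reduction is, up to the normalization $m = w/\rho_0(\alpha)$, identical to the paper's: the specific volume $m=1/\rho$ along the characteristic satisfies $m''+e^{\phi}m=1$, which is exactly the paper's equation \eqref{2ndOrdODE} for the Jacobian $w=\partial x/\partial\alpha$ (via the identity $\rho\,w=\rho_0(\alpha)$), and your derivation of the two-sided bound on $e^{\phi}$ from the conserved energy is precisely Lemma \ref{phi-bd}. Where you genuinely diverge is the mechanism forcing $m$ to vanish. The paper replaces $e^{\phi}$ by its lower bound $a=\exp(V_-^{-1}(H(0)))$ to obtain the differential inequality $\ddot w+aw\le\rho_0(\alpha)$ and then runs an energy/monotonicity argument: multiply by $\dot w$, integrate, and split into cases according to the sign of $\dot w(0)$ and the zeros of $\dot w$, concluding from $|\dot w|^2/2\ge\tilde g(w)$ with $\tilde g(0)\ge a/2-b>0$. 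You instead compare with the constant-coefficient oscillator $\bar m''+c_0\bar m=1$ sharing the same data: $v=m-\bar m$ solves $v''+c_0v=-(e^{\phi}-c_0)m\le0$ with zero data, so the nonnegativity of the Duhamel kernel $\sin(\sqrt{c_0}(t-s))$ on the first half-period gives $m\le\bar m$ there, while $\bar m(\pi/\sqrt{c_0})=2/c_0-m(0)<0$ under \eqref{ThmCon2} independently of $m'(0)$. This is correct, disposes of both of the paper's cases (and the sub-cases on the zeros of $\dot w$) in one stroke, and yields the explicit bound $T_*\le\pi/\sqrt{c_0}$ on the blow-up time, which the paper's contradiction argument does not produce.

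Two caveats concern the rate statement. First, your plan to pin the constant to exactly $1$ by excluding tangential contact ($m'(T_*)=0$) through the lower comparison $m\ge\underline m$ does not work: a lower barrier for $m$ carries no information about $m'$ at $m$'s \emph{own} first zero, and the tangential case cannot be excluded in general. The paper simply retains both cases, obtaining $(t-T_*)u_x\in(1/2,2)$ or $(1,8)$, which is all the stated ``$\approx$'' requires, so this flaw costs you nothing. Second, the one-sided Riccati bound $(u_x)'\ge-C_0-(u_x)^2$ compares $u_x$ from \emph{below} with a solution that itself escapes to $-\infty$ in finite time, so by itself it does not prevent $u_x$ from being more negative than $O(1/(T_*-t))$ somewhere; the two-sided control near $T_*$ has to come, as in the paper's Lemma \ref{Lem_Blowup}, from the boundedness of $\ddot w$ (hence of $\dot w$) combined with $u_x=\dot w/w$. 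Neither caveat affects the finiteness of $T_*$ or the blow-up of $\sup_x\rho$, which your comparison argument establishes.
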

Theorem \ref{MainTheorem} demonstrates that singularities in solutions to the pressureless model \eqref{EP} can occur in a finite time if the initial density at some point is small compared to the initial energy. 
In fact, the negativity of the initial velocity gradient is not required.

We remark that there is a fairly wide class of the initial data satisfying the condition \eqref{ThmCon2}. From the elliptic estimates for the Poisson equation \eqref{EP_3}, we have (see Section \ref{Appen1})
\begin{equation}\label{EnergyBd}
0 \leq H(0) \leq \frac{\sup_{x \in \mathbb{R}}\rho_0}{2} \int_{\mathbb{R}} |u_0|^2\,dx + \frac{1}{K_0} \int_{\mathbb{R}} |\rho_0-1|^2\,dx =: C(\rho_0,u_0),
\end{equation}
where
 $K_0:= (1-\inf\rho_0)/(-\log \inf\rho_0)$.
On the other hand, since $\lim_{\zeta \searrow 0} V_-^{-1}(\zeta) = 0$, for any given constant $0<c<1/2$, there is $\delta_c>0$ such that $\zeta<\delta_c$ implies $\exp(V_-^{-1}(\zeta))>2c$. Thus, \eqref{ThmCon2} holds for all initial data satisfying $\inf \rho_0=c \in (0,1/2) $ and $C(\rho_0,u_0)<\delta_c \ll 1$. In particular, one can take $u_0 \equiv 0$. 


For the pressureless case, along the characteristic curve $x(\alpha,t)$ associated with the fluid velocity $u$, issuing from an initial point $\alpha \in \mathbb{R}$ (see \eqref{CharODE}), one can easily obtain from \eqref{EP} that
\begin{equation}\label{Diff_Eq_1}
D\rho/Dt = -u_x \rho, \quad Du_x/Dt = -u_x^2 + \rho - e^\phi, \quad (D/Dt:= \partial_t + u \partial_x).
\end{equation}
 The behavior of $\rho$ and $u_x$ depends not only on the initial data, but the potential $\phi$ along the characteristic curve due to the nonlocal nature of the system \eqref{EP}. 
 
In \cite{Liu}, one sufficient condition for blow-up was obtained by discarding $e^\phi$ in \eqref{Diff_Eq_1} and solving the resulting (closed) system of differential inequalities for $\rho$ and $u_x$. The solution blow up if the initial data satisfies $\partial_x u_0 \leq -\sqrt{2\rho_0}$ at some point, i.e., the gradient of velocity is large negatively compared to the density.  
 On the other hand, our analysis takes account of the non-local effect, and as such, the blow up criterion \eqref{ThmCon2} involves the non-local quantity.


As in the isothermal case, one can invoke the energy conservation to show that $\phi$ is uniformly bounded in $x$ and $t$ (Lemma \ref{phi-bd}).
Next, we define  
\[
w(\alpha,t):= \frac{\partial x}{\partial \alpha}(\alpha,t)
\]
and derive a second-order ODE \eqref{2ndOrdODE} for $w$. Using Lemma \ref{phi-bd}, we find that $w$ vanishes at a finite time $T_\ast$ if and only if the solution blows up in the $C^1$ topology, i.e., $u_x\searrow -\infty$ as $t \nearrow T_\ast$ at a non-integrable order in time $t$ (Lemma~\ref{Lem_Blowup}).  Our goal is then to find some sufficient conditions guaranteeing $w$ vanishes in a finite time. By applying Lemma \ref{phi-bd}, we employ a comparison argument for the differential inequality to study the behavior of $w$.

The derivation of \eqref{2ndOrdODE} is related to the well-known fact that the Riccati equation can be reduced to a second-order linear ODE (\cite{Ince}, pp.23–25). The Lagrangian formulation is also adopted for some simplified  Euler-Poisson systems, for instance, the ones with zero background \cite{HJL} and constant background \cite{Dav2}. Due to the absence of the Boltzmann relation, the ODE systems for these models corresponding to \eqref{Diff_Eq_1} do not involve the nonlocal term, and one obtains exact solutions of the associated ODEs. (See also Chapter 3 in \cite{Dav} or  p.301 in \cite{Pecseli}.)  The works of \cite{CCTT, ELT,LT1, LT} study the so-called critical threshold for some types of the pressureless Euler-Poisson systems. An interesting open question is whether such critical threshold exists for the pressureless Euler-Poisson system with the Boltzmann relation.

%
%

The paper is organized as follows. In Section \ref{Sect2.1}, we prove the uniform  bounds of $\phi$ and $\phi_x$ in $x$ and $t$. Theorem \ref{MainThm_Warm} and Theorem \ref{MainTheorem} are proved in Section \ref{Sect2.2} and Section \ref{Sect2.3}, respectively. In Section \ref{numerical}, we present several numerical experiments supporting our results as well as numerical examples in which the solutions to the pressureless model and the isothermal model behave differently.

\section{Proof of  Main Theorems}\label{Sect2}
This section is devoted to the proof of our main theorems. We first present some preliminary lemmas that will be crucially used later. We establish the uniform  bounds of $\phi$ and $\phi_x$ in $x$ and $t$.
\subsection{Uniform bounds of $\phi$ and $\phi_x$.}\label{Sect2.1}
 Let us define the functions  
\begin{equation*}
V(z):=
\left\{
\begin{array}{l l}
V_+(z):= \displaystyle{ \int_0^z \sqrt{2U(\tau)}\,d\tau } \; \text{ for } z \geq 0, \\ 
V_-(z):= \displaystyle{ \int_z^0 \sqrt{2U(\tau)}\,d\tau } \; \text{ for }  z \leq  0,
\end{array} 
\right.
\end{equation*}
where $U(\tau):=(\tau-1)e^\tau + 1$ is nonnegative for all $\tau\in \mathbb{R}$ and satisfies 
\[
U(\tau) \to +\infty \text{ as } \tau \to +\infty, \quad U(\tau)\to  1 \text{ as } \tau \to -\infty
\]
(see Figure \ref{U&f}). Hence, $V_+$ and $V_-$ have the inverse functions $V_+^{-1}:[0,+\infty) \to [0,+\infty)$ and $V_-^{-1}:[0,+\infty) \to (-\infty,0]$, respectively. Furthermore, $V$ is of $C^2(\mathbb{R})$. 
\begin{lemma} \label{phi-bd}
As long as the smooth solution to \eqref{EP}--\eqref{Farfield1} exists for $t\in[0,T]$, it holds that
\[
V_-^{-1}\left( H(0) \right) \leq \phi(x,t) \leq V_+^{-1}\left( H(0) \right) \quad \text{ for all } (x,t) \in \mathbb{R} \times [0,T].
\]
\end{lemma}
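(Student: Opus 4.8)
The plan is to read a pointwise bound on $\phi$ out of the conserved energy $H$, exploiting that the electrostatic part of the energy density in \eqref{H-def} is built from exactly the function $U(\tau)=(\tau-1)e^\tau+1$ that defines $V_\pm$. First I would discard the manifestly nonnegative terms in $H$: since $\rho>0$ we have $\tfrac12\rho u^2\geq 0$; the relative pressure satisfies $P(\rho)\geq 0$ for every $\rho>0$ (trivially when $K=0$, and by \eqref{pos_pressure} when $K>0$); and $(\phi-1)e^\phi+1=U(\phi)\geq 0$. Combining this with the conservation law \eqref{EnergyConser} gives, for every $t$ in the existence interval,
\[
\int_\mathbb{R}\Bigl(\tfrac12|\partial_x\phi|^2+U(\phi)\Bigr)\,dx \;\leq\; H(t)=H(0).
\]

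The key step is to upgrade this integral bound to a pointwise one by a perfect-derivative trick. I would regard $V$ as a single $C^1$ function on all of $\mathbb{R}$; the two branches match to first order at $z=0$ precisely because $U(0)=0$, so that $V'(z)=\operatorname{sgn}(z)\sqrt{2U(z)}$ and hence $|V'(z)|=\sqrt{2U(z)}$. Young's inequality then yields the pointwise bound $\tfrac12|\partial_x\phi|^2+U(\phi)\geq \sqrt{2U(\phi)}\,|\partial_x\phi| = |V'(\phi)\,\partial_x\phi| = \bigl|\partial_x\bigl[V(\phi)\bigr]\bigr|$, so that
\[
\int_\mathbb{R}\bigl|\partial_x\bigl[V(\phi(x,t))\bigr]\bigr|\,dx \;\leq\; H(0).
\]
Now, for each fixed $x_0$, the far-field condition \eqref{Farfield1} gives $\phi(x,t)\to 0$, hence $V(\phi)\to V(0)=0$, as $x\to-\infty$, and the fundamental theorem of calculus gives
\[
V\bigl(\phi(x_0,t)\bigr)=\int_{-\infty}^{x_0}\partial_x\bigl[V(\phi)\bigr]\,dx \;\leq\; \int_\mathbb{R}\bigl|\partial_x\bigl[V(\phi)\bigr]\bigr|\,dx \;\leq\; H(0).
\]

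Finally I would invert $V$ branchwise. If $\phi(x_0,t)\geq 0$, then $V_+(\phi(x_0,t))\leq H(0)$ and monotonicity of $V_+$ gives $\phi(x_0,t)\leq V_+^{-1}(H(0))$; since $V_+^{-1}(H(0))\geq 0$, this same upper bound holds trivially when $\phi(x_0,t)<0$. Symmetrically, if $\phi(x_0,t)\leq 0$, then $V_-(\phi(x_0,t))\leq H(0)$ and the strict \emph{decrease} of $V_-$ gives $\phi(x_0,t)\geq V_-^{-1}(H(0))$, which holds trivially when $\phi(x_0,t)>0$. Together these are exactly the claimed two-sided bound.

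The argument is essentially soft, and I expect no serious obstacle. The only points needing care are (i) checking that $V$ is genuinely $C^1$ across $z=0$, so that $V(\phi)$ is differentiable in $x$ with $\partial_x[V(\phi)]=V'(\phi)\,\partial_x\phi$ — this rests on $U(0)=0$; and (ii) justifying the improper integral and the fundamental theorem of calculus, i.e.\ that $V(\phi(\cdot,t))$ is absolutely continuous with the right decay at $-\infty$, which follows from smoothness of the solution together with \eqref{Farfield1}. The conceptual content is simply the recognition that $U$ is exactly the electrostatic energy density, which is what makes the Young-type bound sharp enough to produce $V_\pm$.
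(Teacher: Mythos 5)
Your proposal is correct and follows essentially the same route as the paper: both discard the nonnegative kinetic and pressure terms in the conserved energy, write $V(\phi(x,t))=\int_{-\infty}^{x}V'(\phi)\,\phi_y\,dy$ via the fundamental theorem of calculus, and apply Young's inequality $\sqrt{2U(\phi)}\,|\phi_y|\le U(\phi)+\tfrac12|\phi_y|^2$ to bound this by $H(0)$. The only difference is that you spell out the branchwise inversion of $V_\pm$ at the end, which the paper leaves implicit.
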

\begin{proof}
Since $V \in C^1(\mathbb{R})$ and $V \geq 0$,  we have that for all $t \geq 0$ and $x\in\mathbb{R}$,
\begin{equation*}
\begin{split}
0\leq V \left(\phi(x,t) \right) 
& = \int_{-\infty}^x \frac{dV}{dz}(\phi(y,t))\phi_y\,dy  \\
&  \leq \int_{-\infty}^x \left| \frac{dV}{dz}(\phi(y,t))\right| |\phi_y|\,dy \\
&  \leq  \int_{-\infty}^\infty U(\phi)\,dy + \frac{1}{2} \int_{-\infty}^\infty |\phi_y|^2\,dy \\
& \leq \int_\mathbb{R} \frac{1}{2}\rho u^2 + K(\rho\ln\rho-\rho+1) + \frac12 |\partial_x\phi|^2 +(\phi-1)e^\phi + 1 \,dx \\
& = H(t) = H(0),
\end{split}
\end{equation*}
where the last equality holds due to the energy conservation \eqref{EnergyConser} and the second to the last inequality holds due to \eqref{pos_pressure}. This completes the proof.
\end{proof}

\begin{lemma}\label{Lemma_P1}
Let $K>0$. Assume that  $|\rho(x,t)-1|<1$ for all $(x,t) \in \mathbb{R} \times [0,T]$. Then there is a constant $C_0(K)>0$ such that   
\begin{equation}\label{Lemma_P1_eq}
|\phi_x (x,t)|^2 \leq C_0(K) \cdot O\big(|H(0)|\big) \quad \text{as} \quad |H(0)| \to 0
\end{equation}
for all $(x,t) \in \mathbb{R} \times [0,T]$.
\end{lemma}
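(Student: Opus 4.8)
The plan is to convert the pointwise quantity $|\phi_x|^2$ into an integral over the whole line and then extract the needed smallness from the $L^2$ information carried by the conserved energy $H(0)$. Since $\phi_x$ and $\phi_{xx}=e^\phi-\rho$ both belong to $L^2(\mathbb{R})$ (the former from \eqref{H-def}, the latter because $e^\phi-1$ and $\rho-1$ are square integrable), we have $\phi_x\in H^1(\mathbb{R})$, so $\phi_x\to 0$ as $x\to-\infty$ by the far-field condition \eqref{Farfield1}. The fundamental theorem of calculus together with the Poisson equation \eqref{EP_3} then gives, for every $(x,t)$,
\[
\tfrac12 |\phi_x(x,t)|^2 = \int_{-\infty}^x \phi_y\,\phi_{yy}\,dy = \int_{-\infty}^x \phi_y\,(e^\phi-\rho)\,dy .
\]
Writing $e^\phi-\rho=(e^\phi-1)-(\rho-1)$ and applying Cauchy--Schwarz and the triangle inequality reduces the entire estimate to three global $L^2$ bounds,
\[
\tfrac12 |\phi_x(x,t)|^2 \le \|\phi_x\|_{L^2}\Big( \|e^\phi-1\|_{L^2} + \|\rho-1\|_{L^2}\Big),
\]
so it suffices to show that each factor on the right is $O\big(\sqrt{H(0)}\big)$.

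All three bounds follow from \eqref{H-def} once one observes that each summand of the energy density is nonnegative: $\tfrac12\rho u^2\ge 0$, $P(\rho)\ge 0$ by \eqref{pos_pressure}, $\tfrac12|\phi_x|^2\ge 0$, and $(\phi-1)e^\phi+1=U(\phi)\ge 0$. First, discarding all terms but $\tfrac12|\phi_x|^2$ yields $\|\phi_x\|_{L^2}^2\le 2H(0)$. Second, to control $\|\rho-1\|_{L^2}$ I use the convexity of the relative pressure: since $P''(\rho)=K\rho^{-1}\ge K/2$ on the interval $\rho\in(0,2)$ furnished by the hypothesis $|\rho-1|<1$, a second-order Taylor expansion about $\rho=1$ (using $P(1)=P'(1)=0$) gives $P(\rho)\ge \tfrac{K}{4}(\rho-1)^2$, whence $\|\rho-1\|_{L^2}^2\le \tfrac{4}{K}\int_\mathbb{R}P(\rho)\,dx\le \tfrac{4}{K}H(0)$. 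Third, because $U(\phi)/\phi^2\to \tfrac12$ as $\phi\to 0$ and $U>0$ away from $0$, Lemma \ref{phi-bd} confines $\phi$ to a bounded range on which $U(\phi)\ge c\,\phi^2$, so $\|\phi\|_{L^2}^2\le c^{-1}H(0)$; combining this with $|e^\phi-1|\le e^{\|\phi\|_{L^\infty}}|\phi|$ and the uniform smallness of $\|\phi\|_{L^\infty}$ from Lemma \ref{phi-bd} gives $\|e^\phi-1\|_{L^2}^2\lesssim H(0)$.

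Collecting these estimates produces $|\phi_x(x,t)|^2\le C_0(K)\,H(0)$, with the constant $C_0(K)$ absorbing the factor $1/K$ from the relative-pressure step, which is the claimed bound. I expect the only delicate point, and the place where the hypothesis $K>0$ is genuinely used, to be the control of $\|\rho-1\|_{L^2}$: when $K=0$ the relative pressure $P$ vanishes identically, the energy carries no coercive information on $\rho-1$, and no such $L^2$ bound is available---fully consistent with the fact that the density is exactly the quantity that blows up in the pressureless regime (Theorem \ref{MainTheorem}). The hypothesis $|\rho-1|<1$ enters only to keep $\rho$ inside the interval on which the convexity lower bound $P(\rho)\gtrsim K(\rho-1)^2$ holds with a fixed constant, and the phrasing "$O(|H(0)|)$ as $|H(0)|\to 0$" merely records that the constants in the $U(\phi)\gtrsim\phi^2$ and $|e^\phi-1|\lesssim|\phi|$ steps can be taken uniform once $H(0)$, and hence $\|\phi\|_{L^\infty}$, is small.
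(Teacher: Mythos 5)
Your proof is correct and follows essentially the same route as the paper: both start from $\tfrac12\phi_x^2=\int_{-\infty}^x\phi_y\phi_{yy}\,dy$ with the Poisson equation, bound $\|\phi_x\|_{L^2}^2$ by $H(0)$ from positivity of the energy density, and extract $\|\rho-1\|_{L^2}^2\lesssim H(0)/K$ from the convexity of $P$ under the hypothesis $|\rho-1|<1$. The only (harmless) variation is your treatment of the electron term: you estimate $\|e^\phi-1\|_{L^2}$ via Cauchy--Schwarz and the coercivity $U(\phi)\gtrsim\phi^2$, whereas the paper integrates $(e^\phi-1)\phi_x$ exactly to $e^\phi-\phi-1$ and bounds it pointwise by Taylor expansion with Lemma \ref{phi-bd}.
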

\begin{proof}
Multiplying the Poisson equation by $-\phi_x$, and then integrating in $x$, 
\begin{equation}
\begin{split}\label{phi-x-2}
\frac{\phi_x^2}{2} 
& = \int_{-\infty}^x(\rho-1)(-\phi_x)\,dx + \int_{-\infty}^x(e^\phi-1)\phi_x\,dx \\
& \leq \frac{1}{2}  \int_{\mathbb{R}} | \rho-1 |^2 dx + \frac{1}{2} \int_{\mathbb{R}} | \phi_x|^2 dx + e^\phi - \phi -1 \\ 
& \le \frac{1}{2} \int_{\mathbb{R}} | \rho-1 |^2 dx  + O\big(|H(0)|\big) \quad \text{as} \quad |H(0)| \to 0.
\end{split}
\end{equation}
Here we have used the fact that 
\[ \frac12 \int_{\mathbb{R}} |\phi_x |^2 dx \le H(t) = H(0)\] 
and, by the Taylor expansion with Lemma~\ref{phi-bd}, that 
\[ e^\phi - 1 - \phi \le O\big(|H(0)|\big) \quad  \text{as} \quad |H(0)| \to 0.\]
As long as $|\rho(x,t)-1|<1$ for all $(x,t) \in \mathbb{R} \times [0,T]$, it holds that 
\begin{equation}\label{phi-x-3}
\int_{-\infty}^\infty \frac{1}{4}|\rho-1|^2\,dx \leq \int_{-\infty}^\infty \rho \ln \rho - \rho +1 \,dx   \leq  \frac{H(t)}{K} = \frac{H(0)}{K}
\end{equation}
for all $t\in[0,T]$.
The first inequality in \eqref{phi-x-3} holds thanks to the Taylor expansion, and the second inequality in \eqref{phi-x-3} holds due to \eqref{H-def}. Combining \eqref{phi-x-2} and \eqref{phi-x-3}, we obtain \eqref{Lemma_P1_eq}. We are done. 
\end{proof}
\begin{figure}[h]
\resizebox{110mm}{!}{\includegraphics{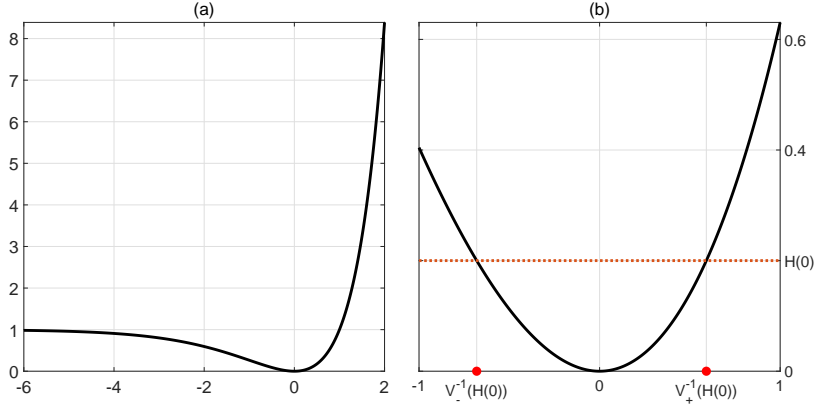}}
\caption{(a): The graph of $U(\tau)=(\tau-1)e^\tau +1$.  (b): The   graph of $V(z)$. By Lemma \ref{phi-bd}, $\phi$ is confined in the interval $[V_{-}^{-1}(H(0)), V_{+}^{-1}(H(0))]$. } \label{U&f} \end{figure}
Now we are ready to prove the main theorems. 
\subsection{Proof of Theorem \ref{MainThm_Warm}}\label{Sect2.2}
Let
\begin{equation}
W := r_x, \quad Z := s_x.
\end{equation}
 By taking $\partial_x$ of \eqref{RI}, we have
\begin{equation}\label{RI_2}
u_x = \frac{W+Z}{2}, \quad \rho_x = \frac{\rho(W-Z)}{2\sqrt{K}}.
\end{equation}
Taking $\partial_x$ of \eqref{RI_1}, and then using \eqref{Eigen} and \eqref{RI_2}, we get
\begin{align}\label{RI_3}
\begin{split}
\begin{split}
-\phi_{xx} 
& = W' + \lambda^+_\rho \rho_x W + \lambda^+_u u_x W \\
& = W' + \frac{W^2}{2} + \frac{ZW}{2},
\end{split} \\
\begin{split}
-\phi_{xx} 
& = Z^\bp + \lambda^-_\rho \rho_x Z + \lambda^-_u u_x Z \\
& = Z^\bp + \frac{Z^2}{2} + \frac{ ZW}{2}.
\end{split}
\end{split}
\end{align}
On the other hand, from \eqref{EP_1}  and \eqref{RI_2}, we have
\begin{equation}\label{RI_4}
\rho' = -\rho Z, \quad \rho^\bp = -\rho W.
\end{equation}
Multiplying \eqref{RI_3} by the integrating factor $\rho^{-1/2}$, and then using \eqref{RI_4} and the Poisson equation \eqref{EP_3}, we obtain 
\begin{subequations} \label{RI_6} 
\begin{align}
& f' = \rho^{1/2}\frac{f^2}{2}  + \rho^{-1/2}(e^\phi - \rho), \\
& g^\bp = \rho^{1/2} \frac{g^2}{2} + \rho^{-1/2}(e^\phi - \rho),
\end{align}
\end{subequations}
where 
\begin{equation*}
f := -\rho^{-1/2}W, \quad g:= -\rho^{-1/2}Z.
\end{equation*}


We define the Lipschitz functions $R(t)$ and $S(t)$ on $[0,T]$ by 
\begin{equation}\label{Def_RS}
R(t) := \max_{x\in \mathbb{R}}|r(x,t)|, \quad  S(t)  := \max_{x\in \mathbb{R}}|s(x,t)|. 
\end{equation}
Here $R(t)$ and $S(t)$ exist as long as the smooth solution to \eqref{EP} satisfies \eqref{RI-infty}. 

Let $\veps \in (0,\tfrac{1}{4})$ be a given number. There is $T_1>0$ such that the solution to \eqref{EP} with the initial data satisfying \eqref{Thm_Con_1}--\eqref{Thm_Con_3} (with $\delta<\veps$) satisfies that for all $(x,t)\in \mathbb{R}\times [0,T_1]$,
\begin{equation}\label{AP1}
|\rho(x,t) - 1| \leq  2\veps.
\end{equation}
We fix $t\in [0,T_1)$ and choose points $\hat{x},\check{x} \in \mathbb{R}$ such that 
\begin{equation*}
R(t) = |r(\hat{x},t)|, \quad S(t) = |s(\check{x},t)|.
\end{equation*}
For any $h \in (0,t)$, we have from \eqref{Def_RS} that
\begin{align*}
\begin{split}
R(t - h) \geq |r(\hat{x} - h \lambda^+(\rho(\hat{x},t),u(\hat{x},t)) ,t - h)|, \\
S(t - h) \geq |s(\check{x} - h \lambda^-(\rho(\check{x},t),u(\check{x},t)) ,t - h)|.
\end{split}
\end{align*}
Then, it is straightforward to check that 
\begin{equation}\label{AP_R1} 
\lim_{h \to 0^+} \frac{R(t-h) - R(t)}{-h} \leq |r'|(\hat{x},t),
\end{equation} 
provided that the limit on the LHS of \eqref{AP_R1} exists. Indeed, if $r(\hat{x},t) \neq 0$, then 
\[
\begin{split}
\lim_{h \to 0^+} \frac{R(t-h) - R(t)}{-h} 
& \leq |r|'(\hat{x},t) \\
& = \frac{r(\hat{x},t)}{|r|(\hat{x},t)}r'(\hat{x},t) \\
& \leq |r'|(\hat{x},t),
\end{split}
\]
and if $r(\hat{x},t)=0$, then
\[
\begin{split}
\lim_{h \to 0^+}\frac{R(t-h) - R(t)}{-h}
& = \lim_{h \to 0^+}\frac{R(t-h) }{-h} \\
& \leq 0 \\
& \leq |r'|(\hat{x},t).
\end{split}
\]
In a similar fashion, we have that 
\begin{equation}\label{AP_R1_1}
\lim_{h \to 0^+} \frac{S(t-h) - S(t)}{-h} \leq |s^\bp|(\check{x},t),
\end{equation}
provided that the limit on the LHS of \eqref{AP_R1_1} exists.  Using \eqref{RI_1}, Lemma~\ref{Lemma_P1} and \eqref{Thm_Con_3}, we obtain from \eqref{AP_R1} and \eqref{AP_R1_1} that  there is a constant $C_1>0$ such that
\begin{equation}\label{AP_R2}
\begin{split}
\frac{d}{dt}\left( R(t)+ S(t) \right) 
& \leq |r'|(\hat{x},t) + |s^\bp|(\check{x},t)  \\
& \leq 2\max_{x}|\phi_x(\cdot,t)| \\
& \leq C_1 \delta^{1/2}
\end{split}
\end{equation}
for almost all $t\in [0,T_1]$. Integrating \eqref{AP_R2} in $t$, we get 
\begin{equation}\label{AP_R3}
R(t) + S(t)  \leq R(0) + S(0) + t  C_1 \delta^{1/2}
\end{equation}
for all $t\in[0,T_1]$. Then, from \eqref{RI}, \eqref{Def_RS} and \eqref{AP_R3}, we notice that
\begin{equation}\label{AP11}
\begin{split}
|\rho-1|
& =|\exp\left(\frac{r-s}{2\sqrt{K}} \right) - 1| \\
& \leq \exp \left| \frac{r-s}{2\sqrt{K}} \right| - 1  \\
& \leq \exp \left( \frac{R(0) + S(0) + t  C_1\delta^{1/2}}{2\sqrt{K}}\right) - 1 \\
& \leq \exp \left( \frac{2\max|u_0| + 2\sqrt{K}\max|\ln\rho_0| +  t C_1\delta^{1/2} }{2\sqrt{K}}\right) - 1.
\end{split}
\end{equation} 

For any given $T_0>0$, we choose $\delta_0=\delta_0(T_0,\veps) \in (0,\veps)$ sufficiently small such that for all $\delta\in(0,\delta_0)$, it holds that
\begin{equation}\label{AP11_1}
\exp \left( \frac{2\max|u_0| + 2\sqrt{K}\max|\ln\rho_0| +  T_0 C_1\delta^{1/2} }{2\sqrt{K}}\right) - 1 < \veps,
\end{equation}
provided that $\max|u_0|<\delta$ and $\max|\rho_0-1| <\delta$. 
Let $\alpha$ and $\beta$ be the numbers such that 
\begin{equation}\label{AP2}
2\beta \geq \rho^{1/2} \geq 2\alpha >0  \quad \textrm{ for all } \rho \in [1-\veps,1+\veps]
\end{equation}
holds. For instance, let $\alpha=\frac{\sqrt{1-\veps}}{2 }$ and $\beta = \frac{\sqrt{1+\veps} }{2 }$.

Now, with $\delta \in (0,\delta_0)$, we solve \eqref{EP} with the initial data satisfying \eqref{Thm_Con_1}--\eqref{Thm_Con_3}. Let $T_\ast$ be  the maximal existence time for the classical solution. We suppose to the contrary that $T_0<T_\ast$.

Using the continuity argument, we claim that 
\begin{equation}\label{AP1_1}
|\rho(x,t) - 1| \leq  2\veps \quad \text{ for all } (x,t)\in\mathbb{R}\times[0,T_0].
\end{equation}
We define the continuous function $Y(t):=\sup_{0 \leq s \leq t}\sup_{x \in \mathbb{R}}|\rho(x,s)-1|$. Then, $Y(0)\leq \delta<\veps$. 
Suppose to the contrary that $Y(t) > 2\veps$ for some $t\in[0,T_0]$. Then  by continuity, there is $t_0\in[0,T_0]$ such that $Y(t_0)=2\veps$ and 
\[
|\rho(x,t)-1| \leq 2\veps \quad \text{ for all } (x,t)\in\mathbb{R}\times[0,t_0].
\]
Then, from the previous calculation, \eqref{AP11} and \eqref{AP11_1}, we have that 
\begin{equation*}
|\rho(x,t)-1| \leq \veps \quad \text{ for all } (x,t)\in\mathbb{R}\times[0,t_0].
\end{equation*}
Hence we obtain that $2\veps = Y(t_0) \leq \veps $, which is a contradiction. This proves \eqref{AP1_1}.

Let  $C_2 = C_2(\delta_0) := \max\{ e^{ V^{-1}_+(\delta_0)}-1 , 1-  e^{ V^{-1}_-(\delta_0)}  \}>0$. Note that $C_2(\delta_0) \to 0$ as $\delta_0 \to 0$. Let $\gamma=\gamma(\veps) :=  (2\beta)^{-1} ( C_2  + 2\veps)$. Then, we get that
\begin{equation}\label{gam-bd}
\begin{split}
|\rho^{-1/2}(e^\phi - \rho)|
& \leq \rho^{-1/2}\left(|e^\phi -1| + |1 - \rho| \right) \\
& \leq (2\beta)^{-1} ( C_2 + 2\veps)\\
& = \gamma
\end{split}
\end{equation}
for all $(x,t)\in\mathbb{R}\times[0,T_0]$.

Now we choose $M(T_0,\veps)>0$ sufficiently large such that 
\begin{equation}\label{AP12}
 M \geq    \gamma T_0 + \frac{1}{\alpha T_0}.
\end{equation}
We define the Lipschitz functions
\begin{equation}\label{Def_F+G+-0}
F^+(t):=\max_{y\in\mathbb{R}} f(y,t), \ \ G^+(t):=\max_{y\in\mathbb{R}} g(y,t).
\end{equation}
Notice that $F^+(t)$ and $G^+(t)$ exist as long as the smooth solution to \eqref{EP} satisfies the end-state condition \eqref{RI-infty}. In fact,  if $f(x,\cdot)<0$ for all $x \in \mathbb{R}$, then $\partial_x r(x,\cdot)>0$ for all $x \in \mathbb{R}$, which contradicts to \eqref{RI-infty}. Hence, $f(x_0,\cdot)\geq 0$ for some $x_0\in\mathbb{R}$, and it follows from \eqref{RI-infty} that $F^+(t)$ exists.

We fix $t\in[0,T_0]$ and choose points $\hat{y},\check{y} \in \mathbb{R}$ such that 
\begin{equation}\label{Def_F+G+}
F^+(t) = f(\hat{y},t), \quad G^+(t) = g(\check{y},t).
\end{equation}
For any $h \in (0,T_0 - t)$, we have
\begin{align}\label{AP3}
\begin{split}
F^+(t + h) \geq f(\hat{y} + h \lambda_+(\rho(\hat{y},t),u(\hat{y},t)) ,t + h), \\
G^+(t + h) \geq g(\check{y} + h \lambda_-(\rho(\check{y},t),u(\check{y},t)) ,t + h).
\end{split}
\end{align}
Then, it holds that 
\begin{equation}\label{AP_31}
\begin{split}
\lim_{h \to 0^+} \frac{F^+(t+h) - F^+(t)}{h} \geq  f'(\hat{y},t), \\
\lim_{h \to 0^+} \frac{G^+(t+h) - G^+(t)}{h} \geq  g^\bp(\check{y},t),
\end{split}
\end{equation} 
provided that the limit on the LHS of \eqref{AP_31} exists. Using \eqref{RI_6}, \eqref{AP2}, and \eqref{gam-bd}, we get 
\begin{align}\label{AP_32}
\begin{split}
f'(\hat{y},t) \geq \alpha f^2(\hat{y},t)  - \gamma = \alpha (F^+(t))^2  -\gamma , \\
g^\bp(\hat{y},t) \geq \alpha g^2(\check{y},t)  - \gamma = \alpha (G^+(t))^2  -\gamma.
\end{split}
\end{align}
By \eqref{AP_31} and \eqref{AP_32}, it holds that for almost all $t \in[0,T_0]$, 
\begin{align}\label{AP4}
\begin{split}
\frac{d}{dt}F^+(t) \geq \alpha (F^+(t))^2  -\gamma, \\
\frac{d}{dt}G^+(t) \geq \alpha (G^+(t))^2  -\gamma.
\end{split}
\end{align} 
We assume that $f(x,0) \geq M$ for some $x\in\mathbb{R}$, see \eqref{Thm_Con_4}. The other case can be treated similarly. 

Let us define the function
\begin{equation}\label{AP5}
X(t):= F^+(t)  - \gamma(T_0 -t), \quad t \in [0, T_0].
\end{equation}
By \eqref{AP4} and \eqref{AP5}, we see that
\begin{equation}\label{AP6}
\begin{split}
\frac{dX}{dt} \geq \alpha 
& \left( X(t) +  \gamma(T_0-t)\right)^2 
\end{split}
\end{equation}
for almost all $t\in[0,T_0]$. 
From \eqref{AP5}, \eqref{AP12}, \eqref{Thm_Con_4}, we have
\begin{equation}\label{AP12_1}
X(0) \ge M - \gamma T_0 \geq \frac{1}{\alpha T_0}.
\end{equation}
Since $X'(t) \geq 0$ from \eqref{AP6}, it follows from \eqref{AP12_1} that $X(t)>0$ for all $t\in[0,T_0]$, and hence we obtain from \eqref{AP6} that  

\begin{equation}\label{AP7}
\frac{dX}{dt} \geq \alpha  X^2(t)
\end{equation} 
for almost all $t\in[0,T_0]$. Integrating \eqref{AP7}, we have
\begin{equation}\label{AP7_1}
X(t) \geq \frac{X(0)}{1-\alpha X(0) t} \quad \textrm{for all } t\in[0,T_0].
\end{equation}
From \eqref{AP12_1} and \eqref{AP7_1}, we conclude that $X(t)$ blows up for some $t\in[0,T_0]$. This contradicts   the hypothesis that $T_0<T_\ast$. Therefore, $T_\ast\leq T_0$. 

Next we prove the boundedness of the solution. 
Let $x^+(t)$ be the characteristic curve associated with $\lambda^+$ issuing from $x_0$, i.e., 
\[ \dot{x}^+(t) =\lambda^+(r(x^+(t),t), s(x^+(t),t)) , \quad x^+(0) =x_0. \]
Then by \eqref{RI_1}, one has 
\[ \frac{d}{dt} r(x^+(t),t) = -\phi_x (x^+(t), t),\]
and upon integration, we have 
\[ r(x^+(t), t) = r_0(x_0) - \int_0^t \phi_x(x^+(\tau), \tau ) d\tau.\]
Now using Lemma~\ref{Lemma_P1}, we obtain
\[ \| r(\cdot, t) \|_{L^\infty(\mathbb{R})} \le \| r_0\|_{L^\infty(\mathbb{R})} + C T_\ast \delta^{1/2}.\]
The estimate for $s$,
\[ \| s(\cdot, t) \|_{L^\infty(\mathbb{R})} \le \| s_0\|_{L^\infty(\mathbb{R})} + C T_\ast \delta^{1/2},\]
can be obtained in a similar way from \eqref{RI_1}. These together with \eqref{RI} imply 
\begin{equation*} 
 \sup_{t\in[0, T_\ast)} \| ( \rho , u ) (\cdot,t) \|_{L^\infty(\mathbb{R})} <\infty,
\end{equation*}
and thanks  to Lemma~\ref{phi-bd} and Lemma~\ref{Lemma_P1}, 
\begin{equation*}
 \sup_{t\in[0, T_\ast)} \| \partial_x^k \phi (\cdot,t) \|_{L^\infty(\mathbb{R})} <\infty, \quad k=0,1,2.
\end{equation*}
This proves \eqref{rs-l-infty-bd}. 
This completes the proof of Theorem~\ref{MainThm_Warm}. \qed

\begin{remark}[Lower bound of the existence time]
Let us define the Lipschitz functions 
\begin{equation}
F(t):= \max_{x \in \mathbb{R}}|f(x,t)|, \quad G(t):=\max_{x \in \mathbb{R}}|G(x,t)|
\end{equation}
on $[0,T]$. In a similar fashion as the previous calculation, we obtain that
\begin{equation}
\frac{d}{dt}F(t) \leq \beta F^2(t) + \gamma \leq \beta \left( F(t)+\frac{\sqrt{\gamma}}{\sqrt{\beta}}\right)^2.
\end{equation}

Letting $Y(t)=F(t) + \tfrac{\sqrt{{\gamma}}}{{\sqrt{\beta}}}$, then solving the resulting differential inequality for $Y$, we obtain
\[
Y(t) \leq \frac{Y(0)}{1-\beta Y(0)t}.
\]
Hence, the solution exists at least on $[0,T_m)$, where
\[
T_m:= \min\left\lbrace \left[\beta\left( \max_{x\in \mathbb{R}}|f_0| + \frac{\sqrt{\gamma}}{\sqrt{\beta}}\right)\right]^{-1}, \left[\beta\left( \max_{x\in \mathbb{R}}|g_0| + \frac{\sqrt{\gamma}}{\sqrt{\beta}}\right)\right]^{-1} \right\rbrace.
\]
\end{remark}

\subsection{Proof of Theorem~\ref{MainTheorem}}\label{Sect2.3}
This subsection is devoted to the proof of  Theorem~\ref{MainTheorem}.

For $u\in C^1$, the characteristic curves $x(\alpha,t)$ are defined as the solution to the ODE
\begin{equation}\label{CharODE}
\dot{x} = u(x(\alpha,t),t), \quad x(\alpha,0)=\alpha \in \mathbb{R}, \quad t \geq 0,
\end{equation}
where $\dot{•}:=d/dt$ and the initial position $\alpha$ is considered as a parameter. 
Since $x(\alpha,t)$ is differentiable in $\alpha$, we obtain from \eqref{CharODE} that
\begin{equation}\label{var_ODE}
\dot{w} = u_x(x(\alpha,t),t) w, \quad w (\alpha,0)=1, \quad t \geq 0,
\end{equation}
where 
\[
w=w(\alpha,t):= \frac{\partial x}{\partial \alpha}(\alpha,t).
\]

We show that $w$ satisfies a certain second-order ordinary differential equation. By integrating \eqref{EP_2} along $x(\alpha,t)$, we obtain that
\begin{equation}\label{CharODE4}
\dot{x} = u(x(\alpha,t),t) =  u_0(\alpha) - \int_0^t \phi_x(x(\alpha,s),s) \,ds.
\end{equation}
 Differentiating \eqref{CharODE4} in $\alpha$, 
\begin{equation}\label{CharODE5}
\dot{w} =  \partial_\alpha u_{0}(\alpha) - \int_0^t \phi_{xx}(x(\alpha,s),s) w(\alpha,s) \,ds.
\end{equation}
Since the RHS of \eqref{CharODE5} is differentiable in $t$, so is the LHS. Hence, we get
\begin{equation}\label{CharODE6}
\ddot{w} = - \phi_{xx}  w = (\rho- e^\phi) w,
\end{equation}
where we have used \eqref{EP_3}.
On the other hand, using \eqref{EP_1} and \eqref{var_ODE}, we obtain that
\[
\begin{split}
\frac{d}{dt}\left( \rho (x(\alpha,t),t) w(\alpha,t) \right)
& = - \rho u_x w + \rho u_x w = 0,
\end{split}
\]
which yields 
\begin{equation}\label{wrho}
\rho (x(\alpha,t),t) w(\alpha,t)  = \rho_0(\alpha).
\end{equation}

%
%


Finally, combining \eqref{var_ODE}, \eqref{CharODE6},  \eqref{wrho}, we see that $w(\alpha,t)$ satisfies the second-order nonhomogeneous equation
\begin{equation}\label{2ndOrdODE}
\ddot{w} + e^{\phi(x(\alpha,t),t)} w = \rho_0(\alpha), \quad w(\alpha,0)=1 , \quad \dot{w}(\alpha,0) = u_{0x}(\alpha).
\end{equation}




From \eqref{wrho}, it is obvious that  for each $\alpha \in \mathbb{R}$, 
\begin{equation*}
\left.
\begin{array}{l l}
0<w(\alpha,t)<+\infty \quad & \Longleftrightarrow \quad 0<\rho(x(\alpha,t),t)<+\infty, \\ 
\lim_{t \nearrow T_*} w(\alpha,t)=0 \quad  & \Longleftrightarrow \quad \lim_{t \nearrow T_*} \rho(x(\alpha,t),t) = +\infty.
\end{array} 
\right.
\end{equation*}
Using Lemma \ref{phi-bd}, we show that $\sup_{x\in\mathbb{R} } |\rho(x,t)|$ and $\sup_{x\in\mathbb{R} } |u_x(x,t)|$ blow up at the same time, if one of them blows up at a finite time $T_\ast$.

\begin{lemma}\label{Lem_Blowup}
Suppose that the classical solution to \eqref{EP} with $K=0$ exists for all $0 \leq t < T_\ast < +\infty$. Then the following statements hold.
\begin{enumerate}
\item  For each $\alpha \in \mathbb{R}$, the following holds true:
 \begin{equation}\label{Eq_1}
\lim_{t \nearrow T_\ast}w(\alpha,t) = 0
\end{equation}
if and only if 
\begin{equation}\label{Eq_2}
\liminf_{t \nearrow T_\ast } u_x\left(x(\alpha,t),t \right) = -\infty.
\end{equation}

\item If one of \eqref{Eq_1}--\eqref{Eq_2} holds for some $\tilde{\alpha}\in\mathbb{R}$, then  there are uniform constants $c_0,c_1>0$ such that 
\begin{equation}\label{Eq_3}
\frac{c_0}{t-T_\ast} <  u_x\left(x(\tilde{\alpha},t),t \right) < \frac{c_1}{t-T_\ast} 
\end{equation}
for all $t<T_\ast$ sufficiently close to $T_\ast$. 
\end{enumerate}
\end{lemma}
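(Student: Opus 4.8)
The plan is to exploit the second-order equation \eqref{2ndOrdODE} for $w=w(\alpha,\cdot)$, viewing it as the \emph{linear} ODE $\ddot w + e^{\phi}w = \rho_0(\alpha)$ whose coefficient $e^{\phi}$ is, by Lemma \ref{phi-bd}, trapped between two positive constants, and whose forcing $\rho_0(\alpha)$ is a positive constant. The starting point is a boundedness observation: since the coefficient and forcing are bounded on $[0,T_\ast)$, a Gronwall estimate for the first-order system $(w,\dot w)$ shows that $w$ and $\dot w$ stay bounded up to the finite time $T_\ast$. Hence $w$ extends to a $C^1$ function on $[0,T_\ast]$, the limits $w(\alpha,T_\ast)$ and $\dot w(\alpha,T_\ast)$ exist and are finite, and $w(\alpha,T_\ast)\ge 0$ because $w>0$ on $[0,T_\ast)$ (recall $w(\alpha,0)=1$ and $\dot w=u_x w$ from \eqref{var_ODE}). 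Moreover, whenever $w\to 0$, the bound on $e^{\phi}$ gives $\ddot w=\rho_0(\alpha)-e^{\phi}w\to\rho_0(\alpha)>0$, so $w$ in fact extends $C^2$ up to $T_\ast$.

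For part (1), the bridge between $w$ and $u_x$ is the identity $u_x(x(\alpha,t),t)=\dot w/w$ coming from \eqref{var_ODE}. I would prove the implication from \eqref{Eq_2} to \eqref{Eq_1} by contraposition: if $w(\alpha,T_\ast)>0$, then near $T_\ast$ one has $w\ge w(\alpha,T_\ast)/2$ while $\dot w$ stays bounded, so $u_x=\dot w/w$ is bounded and $\liminf u_x>-\infty$. For the converse, assume $w(\alpha,T_\ast)=0$; then $\ddot w(\alpha,T_\ast)=\rho_0(\alpha)>0$, and a Taylor expansion of $w$ at $T_\ast$ (using $w(\alpha,T_\ast)=0$, $\dot w(\alpha,T_\ast)=:L\le 0$, and $\ddot w(\alpha,T_\ast)=\rho_0(\alpha)$) forces $u_x=\dot w/w\to-\infty$: when $L<0$ the zero of $w$ is simple and $u_x\sim -1/(T_\ast-t)$, while when $L=0$ the zero is double and $u_x\sim -2/(T_\ast-t)$.

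For part (2), I would upgrade the previous step to quantitative two-sided bounds. Near $T_\ast$ one has $\tfrac12\rho_0(\tilde\alpha)\le\ddot w\le 2\rho_0(\tilde\alpha)$, since $e^{\phi}w\to 0$ and $e^{\phi}$ is bounded. Integrating this twice from $t$ to $T_\ast$, together with $w(\tilde\alpha,T_\ast)=0$ and $\dot w(\tilde\alpha,T_\ast)=L\le 0$, yields matching upper and lower bounds for $w$ and for $|\dot w|$ in terms of $\tau:=T_\ast-t$ (of order $\tau$ and a constant when $L<0$, of order $\tau^2$ and $\tau$ when $L=0$). Forming $u_x=\dot w/w$ then traps $u_x(t)\,(t-T_\ast)=-|u_x|\,\tau$ between two positive constants for all $t$ close to $T_\ast$, which is precisely \eqref{Eq_3}; the limiting value is $1$ when $L<0$ and $2$ when $L=0$, so the constants $c_0,c_1$ can even be chosen independently of $\tilde\alpha$.

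The main obstacle is the marginal case $\dot w(\tilde\alpha,T_\ast)=L=0$, where $w$ has a \emph{double} zero so that $\dot w/w$ is a genuine $0/0$ indeterminate form. Here the crude linear lower bound $w\gtrsim -L\,\tau$ degenerates, and one must descend to the quadratic scaling $w\asymp\tau^2$, $\dot w\asymp\tau$ extracted from $\ddot w\to\rho_0(\tilde\alpha)>0$ in order to recover the correct $1/(t-T_\ast)$ rate; checking that the resulting constants remain uniform is the one step that requires genuine care.
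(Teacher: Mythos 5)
Your proposal is correct and follows essentially the same route as the paper: bound $w$, $\dot w$, $\ddot w$ up to $T_\ast$ using the uniform bound on $e^{\phi}$ from Lemma \ref{phi-bd}, pass to the limits $w(T_\ast)\ge 0$, $\dot w(T_\ast)\le 0$, $\ddot w\to\rho_0(\alpha)>0$, and read off the rate of $u_x=\dot w/w$ from the Taylor expansion at $T_\ast$, splitting into the simple-zero case $\dot w(T_\ast)<0$ and the double-zero case $\dot w(T_\ast)=0$. The only differences are cosmetic (Gronwall in place of integrating $\ddot w<\rho_0$ twice, and proving the equivalence in part (1) by contraposition rather than via $w=\exp\bigl(\int_0^t u_x\,ds\bigr)$ and a contradiction along a sequence), and your limiting values $1$ and $2$ for $(t-T_\ast)u_x$ match the paper's two-sided bounds.
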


\begin{remark}
\begin{enumerate}
\item By integrating \eqref{var_ODE}, we obtain
\begin{equation}\label{var_ODE1}
w(\alpha,t) =\exp \left( \int_0^t u_x(x(\alpha,s),s) \,ds \right).
\end{equation}
While it is easy by \eqref{var_ODE1} to see that \eqref{Eq_1} implies \eqref{Eq_2}, the converse is not obvious since one cannot exclude the possibility that $u_x$ diverge  in some other earlier time, say $T_0<T_\ast$ with an integrable order in $t$, for which we still have $w(\alpha, T_0)>0$.
%
%
%
%
For the proof of the converse and obtaining the blow-up rate \eqref{Eq_3}, Lemma \ref{phi-bd}, the uniform boundedness of $\phi$, will be crucially used. 
%
%
%
%
%
\item From \eqref{Taylor_w} and \eqref{Taylor_w3}, we see that if $\dot{w}(T_\ast)<0$, the vanishing (or blow-up) order of $w$ (or $\rho$) is $(t-T_\ast)$ (or $(t-T_\ast)^{-1}$) and if $\dot{w}(T_\ast)=0$, the vanishing (or blow-up) order of $w$ (or $\rho$) is $(t-T_\ast)^2$ (or $(t-T_\ast)^{-2}$).
\end{enumerate}
\end{remark}

%

\begin{proof}[Proof of Lemma \ref{Lem_Blowup}]
We suppress the parameter $\alpha$ for notational simplicity. We first make a few basic observations. By the assumption, we have that $w(t)>0$ for all $t\in[0,T_\ast)$. From \eqref{2ndOrdODE} and the fact that $e^\phi w(t) >0$, we obtain that
\begin{equation}\label{2ndOrdODE2}
\ddot{w}(t) < \ddot{w}(t) + e^{\phi(x(\alpha, t),t)} w(t) = \rho_0,
\end{equation}
for which we integrate \eqref{2ndOrdODE2} in $t$ twice to deduce that  $w(t)$ is bounded above on $[0,T_\ast)$.
This together with \eqref{2ndOrdODE} and  Lemma \ref{phi-bd} implies that  $|\ddot{w}(t)|$ is bounded on the interval $[0,T_\ast)$. 
Using this for
\[
\dot{w}(t) -\dot{w}(s) = \int_s^t \ddot{w}(\tau)\,d\tau,
\]
we see that $\dot{w}(t)$ is uniformly continuous on $[0,T_\ast)$. 
Hence, 
we see that the following limit 
\[
 \dot{w}(T_\ast):=\lim_{t \nearrow T_\ast} \dot{w}(t)\in(-\infty,+\infty)
\]
exists.  In a similar fashion, one can check that
\[
 w(T_\ast):=\lim_{t \nearrow T_\ast}w(t)\in[0,+\infty).
\]
We prove the first statement. It is obvious from \eqref{var_ODE1} that \eqref{Eq_1} implies \eqref{Eq_2}. To show that \eqref{Eq_2} implies \eqref{Eq_1}, we suppose $\lim_{t \nearrow T_\ast} w(t) >0$. Then, since $w(0)=1$, $w(t)$ has a strictly positive lower bound on $[0,T_\ast)$. From \eqref{Eq_2}, we may choose a sequence $t_k$ such that $u_x(t_k) \to -\infty$ as $t_k \nearrow T_\ast$. Now using \eqref{var_ODE}, we obtain that
\[
u_x(t_k)w(t_k) - u_x(s)w(s) = \dot{w}(t_k)-\dot{w}(s)  =  \int_s^{t_k} \ddot{w}(\tau)\,d\tau,
\]
which leads a contradiction by letting $t_k \nearrow T_\ast$. Hence, \eqref{Eq_1} holds.


Now we prove the second statement. Due to the first statement, it is enough to assume that \eqref{Eq_1} holds for some $\tilde{\alpha}\in\mathbb{R}$. From  \eqref{2ndOrdODE} and Lemma \ref{phi-bd}, we see that  \eqref{Eq_1} implies
\begin{equation}\label{Taylor_w1}
\lim_{t \nearrow T_\ast} \ddot{w}(t) = \rho_0 >0.
\end{equation} 
 Since  $w(t)>0$ on $[0,T_\ast)$, \eqref{Eq_1} also implies that 
\begin{equation}\label{Taylor_w2}
\dot{w}(T_\ast)= \lim_{t \nearrow T_\ast}\dot{w}(t) \leq 0.
\end{equation}

By the fundamental theorem of calculus, one has $\dot{w}(t) = \dot{w}(\tau) + \int_{\tau} ^t \ddot{w}(s)\,ds$ for all $t, \tau \in[0,T_\ast)$. Then taking the limit $\tau \nearrow T_\ast$ and integrating once more, we obtain that for $t<T_\ast$,
\begin{equation}\label{Taylor_w}
\left.
\begin{array}{l l}
\dot{w}(t) = \dot{w}(T_\ast) + \displaystyle{ \int_{T_\ast} ^t \ddot{w}(s)\,ds, } \\ 
w(t) =  \dot{w}(T_\ast) (t - T_\ast) + \displaystyle{ \int_{T_\ast}^t \ddot{w}(s)(t-s)\,ds. }
\end{array} 
\right.
\end{equation}
Using \eqref{Taylor_w1}, we have that for all $t<T_\ast$ sufficiently close to $T_\ast$,
\begin{equation}\label{Taylor_w3}
\left.
\begin{array}{l l}
\displaystyle{ 2\rho_0 ( t-T_\ast) <  \int_{T_\ast} ^t \ddot{w}(s)\,ds < \frac{\rho_0}{2} ( t-T_\ast), } \\ 
\displaystyle{ \frac{\rho_0}{4}(T_\ast-t)^2 < \int_{T_\ast}^t \ddot{w}(s)(t-s)\,ds < \rho_0(T_\ast-t)^2. }
\end{array} 
\right. 
\end{equation}
 Thanks to \eqref{Taylor_w2}, we note that either $\dot{w}(T_\ast)<0$ or $\dot{w}(T_\ast)=0$ holds. Combining \eqref{Taylor_w}--\eqref{Taylor_w3}, we conclude that  if $\dot{w}(T_\ast)<0$, then
\[
1/2  <(t - T_\ast)u_x = (t - T_\ast)\frac{\dot{w}}{w}  < 2 ,
\]
and if $\dot{w}(T_\ast)=0$, then
\[
1  <(t - T_\ast)u_x = (t - T_\ast)\frac{\dot{w}}{w}  < 8.
\]
This completes the proof of  \eqref{Eq_3}. 
\end{proof}

Now we are ready to prove Theorem~\ref{MainTheorem}.
\begin{proof}[Proof of Theorem~\ref{MainTheorem}]
We consider the equation \eqref{2ndOrdODE} with $\alpha\in \mathbb{R}$, for which \eqref{ThmCon2} holds. Suppose that the smooth solution to \eqref{EP} with $K=0$ exists for all $t\in[0,+\infty)$. Then, thanks to Lemma~\ref{Lem_Blowup}, we must have 
\begin{equation}\label{Assume1}
w(\alpha,t)>0 \quad \text{ for all } t\in[0,+\infty). 
\end{equation}
Combining \eqref{2ndOrdODE} and Lemma \ref{phi-bd}, we have that  for all $t\in [0,+\infty)$,
\begin{equation}\label{DiffIneq}
\ddot{w}(t) + a w(t) \leq b, \quad w(0)\geq 1,
\end{equation} 
where we let
\[
w(t)=w(\alpha,t), \quad a:=\exp\left( V_-^{-1}(H(0)) \right), \quad  b:=\rho_0(\alpha)
\]
for notational simplicity. We notice that the inequality $w(0) \geq 1$ is allowed in \eqref{DiffIneq}. In what follows, we  show  that  there exists a finite time $T_*>0$ such that $\lim_{t \nearrow T_*}w(\alpha,t) =0$. This contradicts to \eqref{Assume1}, and hence finishes the proof of Theorem \ref{MainTheorem}.

We consider two disjoint cases, call them \textit{Case A} and \textit{Case B} for $\dot{w}(0)\leq 0$ and $\dot{w}(0)>0$, respectively.

 
\textit{Case A}: We first consider the case $\dot{w}(0)\leq 0$.  We claim that $b - a w(t) =0$ for some $t$.  Suppose to the contrary that $b -a w(t) \ne 0$ for all $t \geq 0$. 
Since  $b-aw(0)<0$ from \eqref{ThmCon2} and $w(0)\ge1$, we have 
\begin{equation}\label{DiffIneq1}
b -a w(t) < 0 \quad \text{ for all } t \in [ 0,+\infty).
\end{equation} 
Combining \eqref{DiffIneq}--\eqref{DiffIneq1}, we see that $\ddot{w}(t)<0$ for all $t$. From this and $\dot{w}(0)\leq 0$, we have that $\dot{w}(t) \to c \in [-\infty,0)$  as $t \to +\infty$, which implies that  $w(t) \to -\infty$ as $t \to +\infty$. This is a  contradiction to \eqref{DiffIneq1}. This proves the claim. 


Then, by the continuity of $w$, we can choose the minimal $T_1>0$ such that 
\begin{equation}\label{Ineq4}
b=a w(T_1).
\end{equation}
 Hence it holds $\ddot{w}(t) \le b - a w(t)<0$ for all $t \in (0,T_1)$, which in turn implies   
\begin{equation*}
\dot{w}(t)=\int_0^t \ddot{w}(s)\,ds + \dot{w}(0) < 0 \quad \text{for all } t \in (0,T_1].
\end{equation*}

Now we split the proof further into two cases: 
\begin{subequations}
\begin{align}
&  \text{(i)} \quad    \dot{w}(t)<0 \text{ on }  (0,T_1] \text{ and } \dot{w}(t) \text{ has a zero on } (T_1, +\infty), \\
&   \text{(ii)} \quad  \dot{w}(t)<0 \text{ for all } t > 0. \label{Ineq12}
\end{align}
\end{subequations}
%

\textit{Case} (i) :  We choose the minimal $T_2>T_1$ satisfying 
\begin{equation}\label{wpT2}
\dot{w}(T_2)=0.
\end{equation}
Then, $\dot{w}(t) < 0 $ for $t\in (0,T_2)$. It suffices to show that $w(T_2)\le0$ since this implies that 
$w(t)=0$ for some $t\in (0,T_2]$ as desired.

We shall show that $w(T_2) \le 0$ by contradiction.  Suppose not, i.e., $w(T_2)>0$. Then since $w$ decreases on $[T_1,T_2]$, we have 
\begin{equation}\label{Ineq3}
0 < w(T_2) < w(T_1)= b/a,
\end{equation} 
where the equality is from \eqref{Ineq4}. Multiplying \eqref{DiffIneq} by $\dot{w} \leq 0$, and then integrating over $[0,t]$, we obtain that for $t\in[0,T_2],$
\begin{equation}\label{Ineq1}
\frac{|\dot{w}(t)|^2}{2}  \geq -a\left(\frac{w(t)^2-|w(0)|^2}{2} \right) + b(w(t)-w(0)) + \frac{|\dot{w}(0)|^2}{2}.
\end{equation}
Here we define a function $\tilde{g}(w) := -a\left(\frac{w^2-|w(0)|^2}{2} \right) + b(w-w(0)) + \frac{|\dot{w}(0)|^2}{2}$.
We see that 
\begin{equation}\label{Ineq2-0}
\begin{split}
\tilde{g}(0)   = \frac{a|w(0)|^2}{2} -w(0)b+ \frac{|\dot{w}(0)|^2}{2}
  \ge \frac{a}{2} - b+ \frac{|\dot{w}(0)|^2}{2} >0,
\end{split}
\end{equation}
where we have used the assumption $w(0)\ge 1$ and \eqref{ThmCon2} for the last two inequalities, respectively. 
By inspection, one can check that the function $\tilde{g}(w)$ is strictly increasing on $[0,b/a]$. Using this together with \eqref{Ineq2-0}, we have
\begin{equation}\label{Ineq2}
\begin{split}
\tilde{g}(w) 
 \geq \tilde{g}(0) >0 \ \ \text{ for all } w \in[0,b/a].
\end{split}
\end{equation}
Combining \eqref{wpT2}--\eqref{Ineq2}, we have
\[
0 = \frac{|\dot{w}(T_2)|^2}{2} \geq \tilde{g}(w(T_2))  > 0,
\]
which is a contradiction. 


\textit{Case} (ii) : 
%
%
We first claim that $\limsup_{t \to \infty} \dot{w}(t) = 0$. 
If not, i.e., $\limsup_{t \to \infty} \dot{w}(t) \ne 0$, then thanks to \eqref{Ineq12}, 
 we have $\limsup_{t \to \infty} \dot{w}(t)< 0$. This implies $w(t)=0$ for some $t>0$, which is a contradiction to \eqref{Assume1}.

On the other hand, since $w$ is monotonically decreasing on $(0,\infty)$ thanks to \eqref{Ineq12},
we see that $w_\infty:=\lim_{t\to \infty}w(t)$ exists and  $w_\infty  \in [0,b/a]$ by \eqref{Ineq4}.
Similarly as in obtaining \eqref{Ineq1}, we multiply \eqref{DiffIneq} by $\dot{w}(t) \leq 0$, $t\in[0,\infty)$, and then integrate the resultant over $[0,t]$ to obtain that   \eqref{Ineq1} holds for $t\in[0,\infty)$.  Since $0=\limsup_{t \to \infty} \dot{w}(t)=\liminf_{t \to \infty} |\dot{w}(t)|$,   we arrive at 
\[
0 = \liminf_{t \to \infty} |\dot{w}(t)|^2/2 \geq \liminf_{t \to \infty} \tilde{g}(w(t)) = \tilde{g}(w_\infty)\geq \tilde{g}(0)  >  0,
\]
where we have used \eqref{Ineq2} for the last inequality. 
This is absurd, which completes the proof for \textit{Case A}. 

\textit{Case B}: Now we consider the case $\dot{w}(0)> 0$. We claim that  $\dot{w}(t)=0$ for some $t>0$. If not, i.e., $\dot{w}(t)>0$ for all $t \geq 0$, we have 
\[
\ddot{w}(t) \leq b - a w(t) \leq b - aw(0) < 0.
\]
This implies that  $\dot{w}(t) \to -\infty$ as $t \to +\infty$, which is a contradiction to the assumption that $\dot{w}(t)>0$ for all $t\ge0$. 

By the continuity of $\dot{w}(t)$, there is a minimal number $T_0>0$ such that $\dot{w}(T_0) = 0$. Since $\dot{w}(t)>0$ for $t\in[0,T_0)$, we see that $w(T_0)\geq w(0) \geq 1$. 
Now one can apply the same argument as \textit{Case A}  to conclude that $w(t)$ has a zero on the interval $[T_0,+\infty)$.
This completes the proof of Theorem \ref{MainTheorem}.
\end{proof}

We remark that, following the proof of Theorem \ref{MainTheorem}, one obtains an interesting lemma concerning the existence of zeros of second-order linear differential inequality (see Appendix \ref{Appen2}).

\section{Numerical experiments and discussions} \label{numerical}

In this section, we present numerical simulations concerning our blow-up results presented in Theorem~\ref{MainThm_Warm} and Theorem~\ref{MainTheorem}. Based on our numerical observations, we also discuss   quantitative and qualitative differences between the two models, i.e., $K=0$ and $K>0$, in the behaviors of their solutions. Referring to \cite{LS}, the implicit pseudo-spectral scheme with $\Delta x=10/2^{10}$ is employed to solve \eqref{EP} numerically on periodic domains  for numerical convenience.   The Crank–Nicolson method with $\Delta t=0.01$ is applied for time marching.

\begin{table}[h]
\begin{tabular}{c|c|c|c}
&(a)&(b)&(c)\\\hline
$\rho_0(x)$&$1-0.7 \text{sech}(3x)$&$1-0.7 \text{sech}(2x)$&$1-0.3 \text{sech}(2x)$\\\hline
 $H(0)$  & $0.0875$& $0.1671$& $0.0036$\\\hline
$\exp(f_{-}^{-1}(H(0)))$ &$0.6448$&$0.5390$&$0.7585$\\\hline
Blow-up condition (\ref{ThmCon2}) & Hold  & Not hold & Not hold \\\hline
Numerical results & Figure~\ref{f1} & Figure \ref{f2} & Figure \ref{f3} \\
\end{tabular}
\caption{The pressureless case. $\rho_0$ is the initial density function. The initial velocity $u_0$ are given as identically zero function for all cases. $H(0)$ is the energy defined in \eqref{EnergyConser} for the initial data.}\label{Table1}
\end{table}

\begin{figure}[tbhp!]
{\includegraphics[width=140mm,height=90mm]{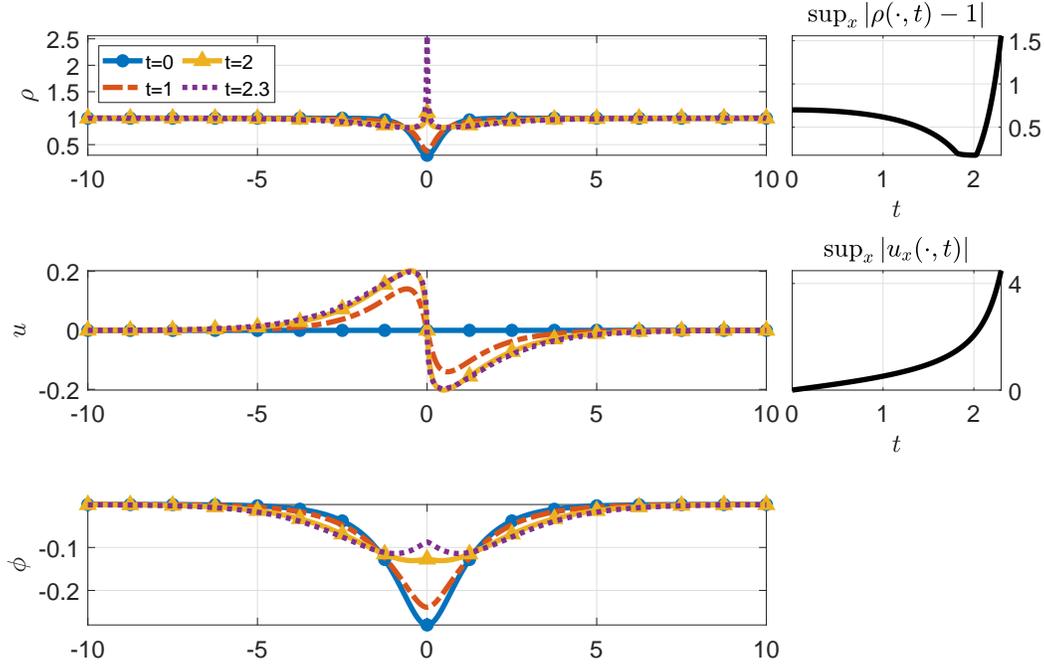}}
\caption{Numerical solution to the pressureless Euler-Poisson system for the case (a) in Table \ref{Table1}. The initial data are $\rho_0 = 1 - 0.7\text{sech}(3x)$ and $u_0 \equiv 0$. $\rho(0,t)$ and $-u_x(0,t)$ are getting larger as $t$ increases, and they blow up in a finite time.} \label{f1}
\end{figure}

We first numerically solve the pressureless model, i.e., \eqref{EP} with $K=0$, for which we consider three cases (see Table \ref{Table1}). In case (a), where  condition (\ref{ThmCon2}) holds,  we observe that $\rho$ and $u_x$   blow up after $t=2.3$ in Figure \ref{f1}. This supports our result in Theorem~\ref{MainTheorem}.  In case (b),  we find that the solutions are bound to break down after $t=2.7$ in Figure \ref{f2} while  condition (\ref{ThmCon2}) is not satisfied. This indicates that the blow-up condition (\ref{ThmCon2}) has a room to be improved.   Lastly, in case (c), where condition \eqref{ThmCon2} is not satisfied,  the smooth solutions seem to persist for $t\in[0,20]$ in Figure \ref{f3}.

\begin{figure}[h]
 \includegraphics[width=140mm,height=60mm]{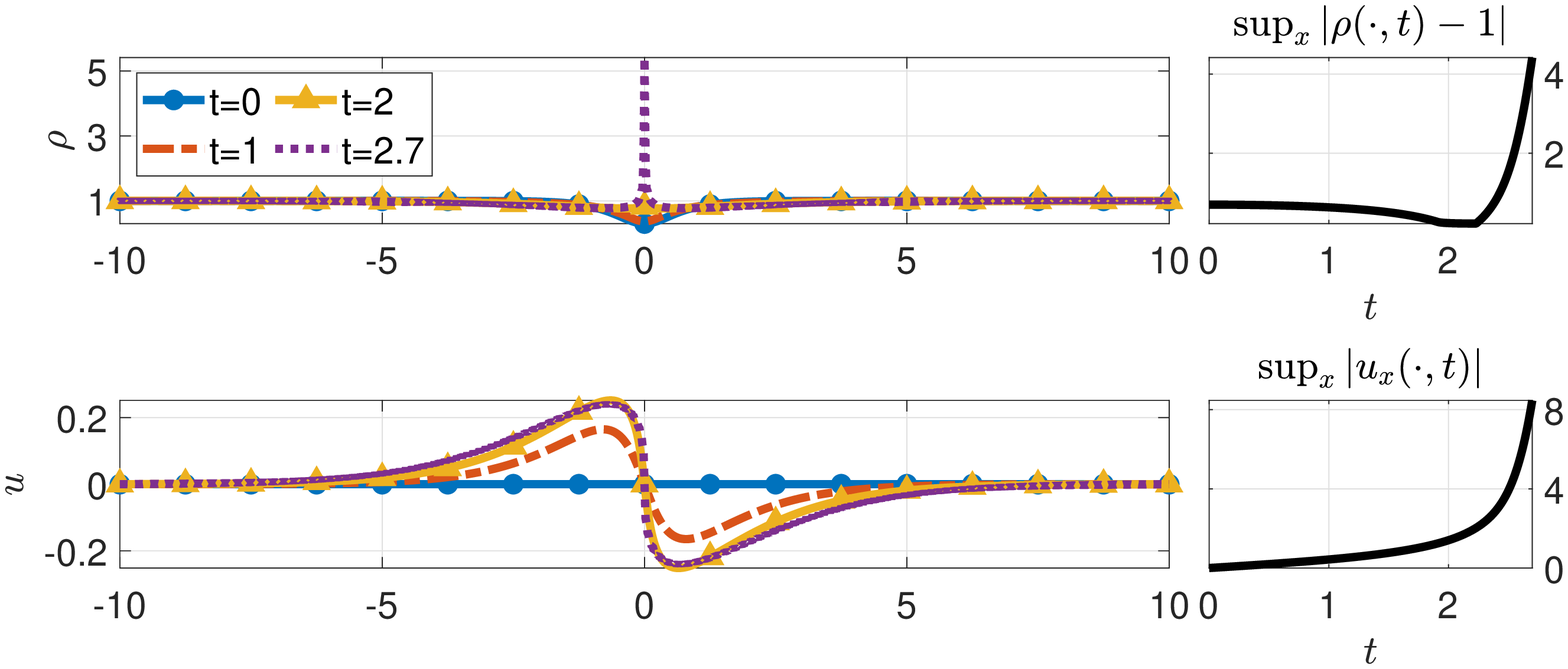}
\caption{Numerical solution to the pressureless Euler-Poisson system for the case (b) in Table \ref{Table1}. The initial data are $\rho_0 = 1 - 0.7\text{sech}(2x)$ and $u_0 \equiv 0$. Although the condition \eqref{ThmCon2} does not hold,  $\rho(0,t)$ and $-u_x(0,t)$ are expected to eventually blow up at a finite time. } \label{f2}
\end{figure}

\begin{figure}[h]
 \includegraphics[width=140mm,height=60mm]{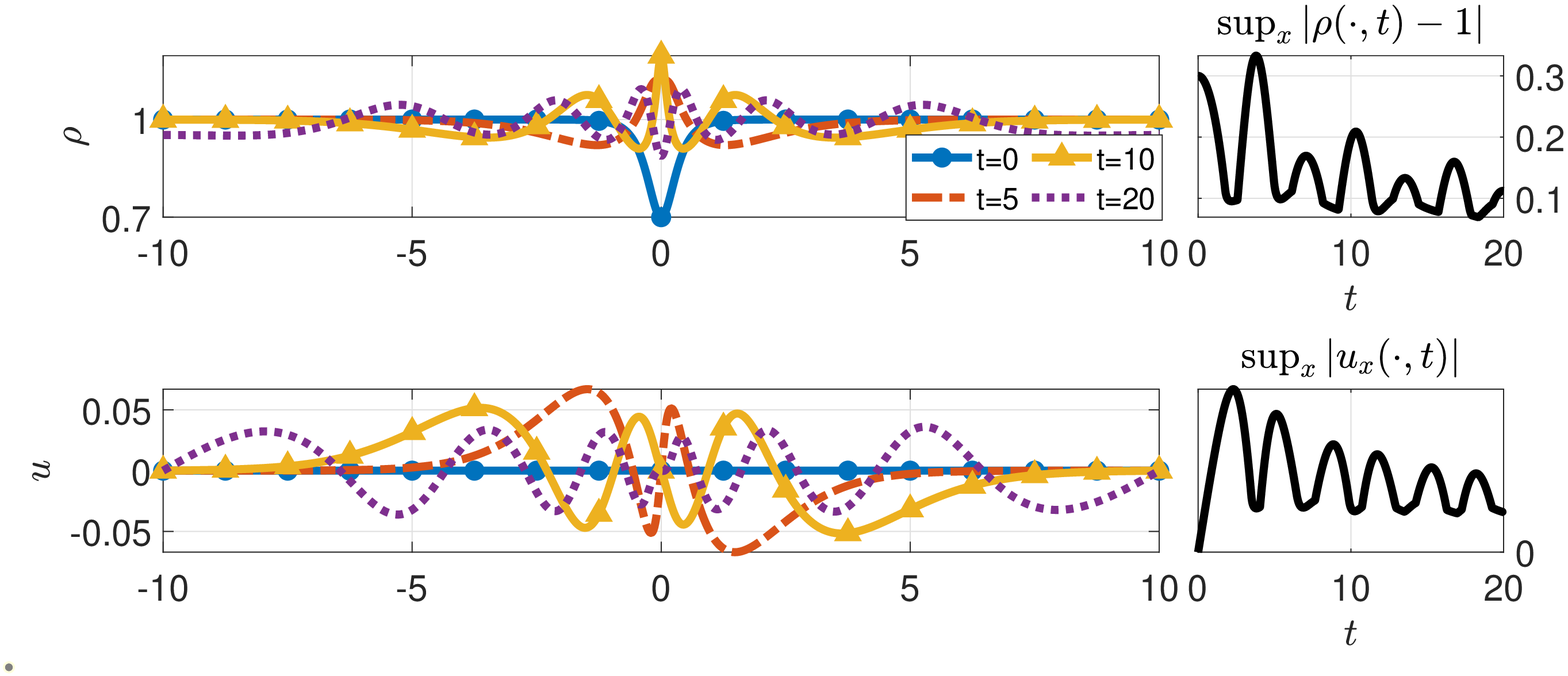}
\caption{Numerical solution to the pressureless Euler-Poisson system for the case (c) in Table \ref{Table1} when the initial conditions does not hold \eqref{ThmCon2}.  The solutions keep oscillating and decreasing as times t goes by. } \label{f3}
\end{figure}

Now we consider the isothermal model, i.e., \eqref{EP} with $K>0$. Figure \ref{Fig3} shows the numerical solution to the isothermal ($K=0.5$) Euler-Poisson system. The same initial data are taken as those for case (a) of the pressureless model (Figure \ref{f1}). We observe that the solution blows up in a short time, but it occurs in a different way from the pressureless case; $\rho$ and $u$ stay bounded while their gradients blow up. In fact, this feature is asserted and proved in Theorem~\ref{MainTheorem}. We remark that unlike the pressureless case, the gradient blow-up occur near the origin, not at the origin. This makes sense since the models have different nature in terms of characteristic curves. 
In fact, the ion waves propagate very differently as illustrated in Figure \ref{Fig4}. In the pressureless case, $\rho$ oscillates at $x=0$ and the resulting oscillatory waves propagate. In the isothermal case, in contrast, such an oscillatory behaviors does not appear around $x=0$. Instead, some localized waves propagate with some oscillatory waves. This difference  is caused by their different ``characteristic curves". Hence, the following question is naturally posed: does the blow-up occur due to the collision of the dominant characteristics associated with $u\pm \sqrt{K}$? For both models, the analytical study on the mechanism of ion-wave propagation and blow-up would be very interesting and challenging due to the coupled electric potential term, which gives rise to a  dispersive effect.

\begin{figure}[tbhp!] 
{ \includegraphics[width=140mm,height=60mm]{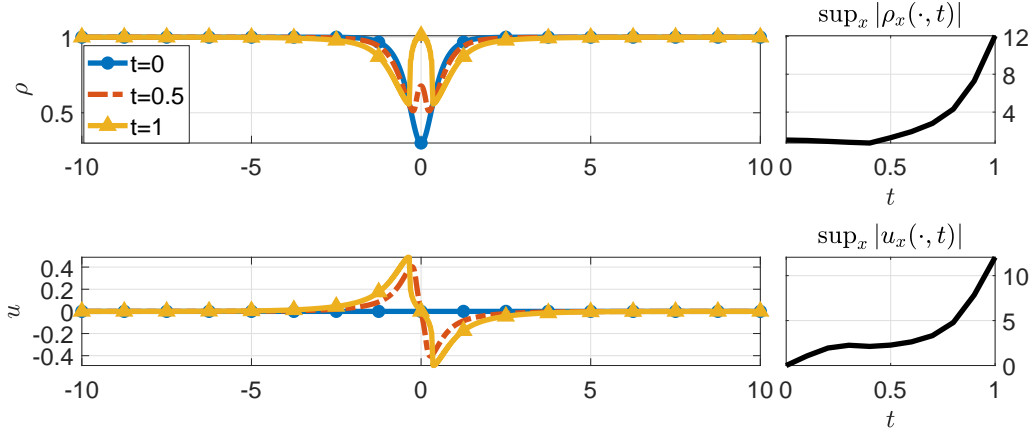}}
  \caption{Numerical solution to the isothermal ($K=0.5$) Euler-Poisson system.  The initial data are $\rho_0 = 1 - 0.7\text{sech}(3x)$ and $u_0 \equiv 0$. $\|\partial_x(\rho,u)(\cdot,t)\|_{L^\infty}$ blows up in a finite time while $\|(\rho,u)(\cdot,t)\|_{L^\infty}$ is bounded.} \label{Fig3}
\end{figure}

\begin{figure}[tbhp!]
 \includegraphics[width=140mm,height=30mm]{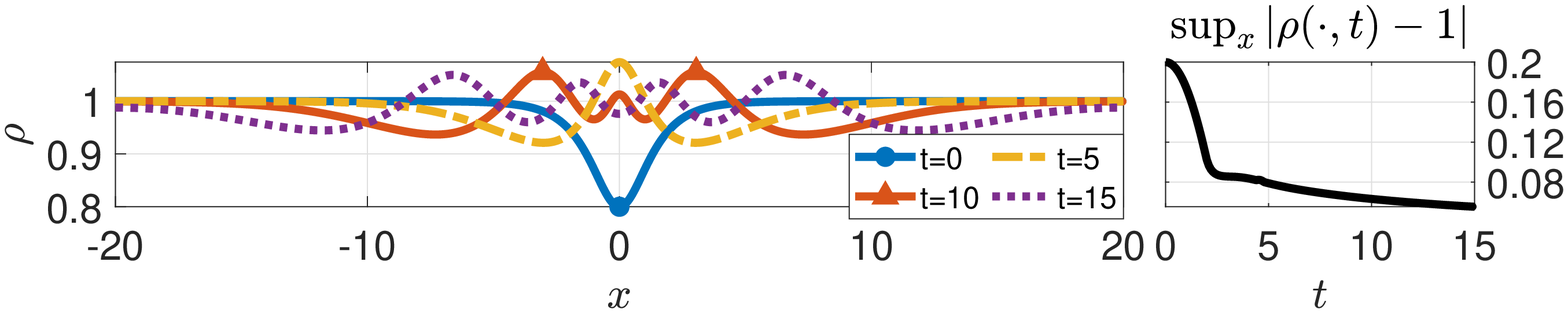}\\ 
 \includegraphics[width=140mm,height=30mm]{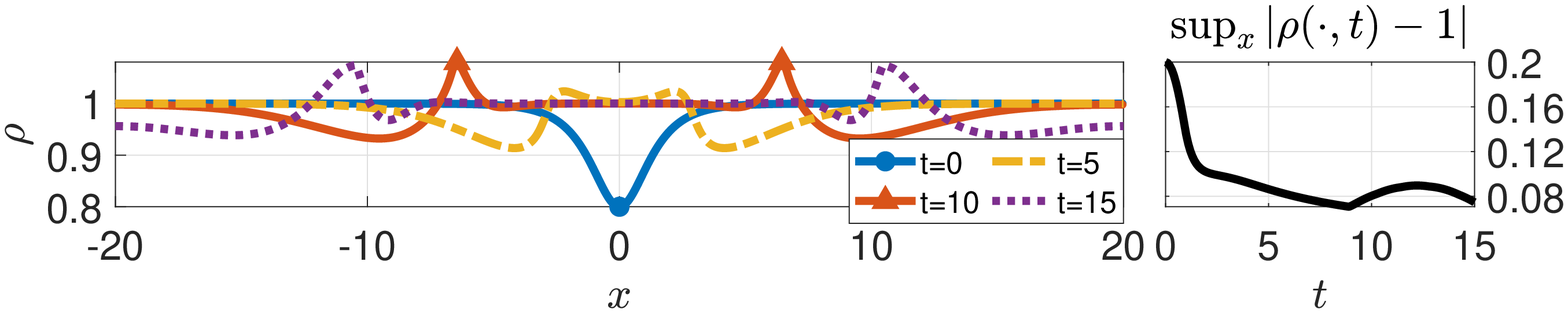} \\
      \caption{Numerical solution to the pressureless (above) and the isothermal (below, $K=0.5$) Euler-Poisson systems. The initial data are taken as $\rho_0 = 1-(0.2)\text{sech}(x)$ and $u_0 \equiv 0$.  } \label{Fig4}
\end{figure}

\begin{figure}[tbhp!] 
 \includegraphics[width=140mm,height=30mm]{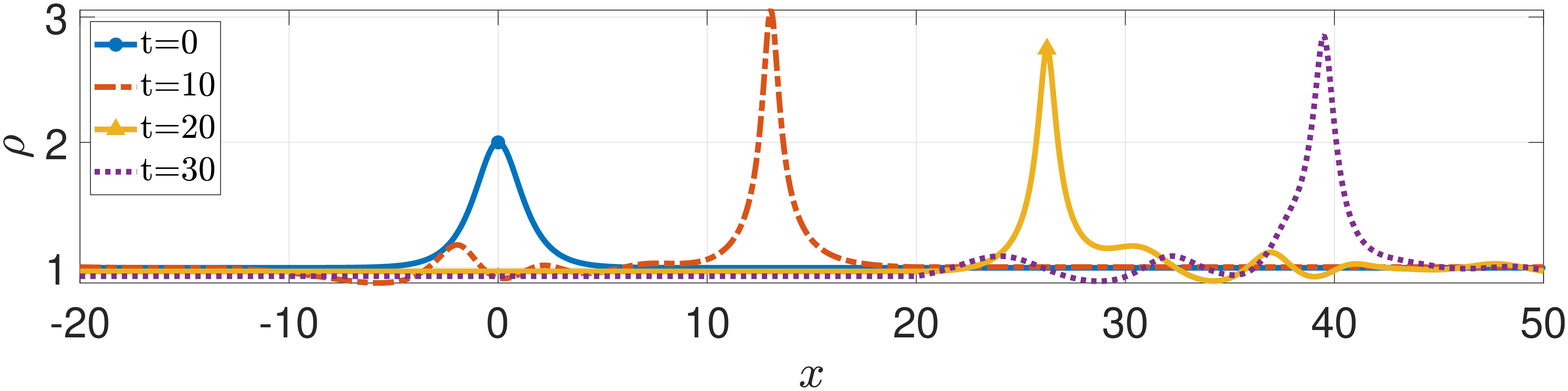} \\ 
  (a) $K=0$ \\
 \includegraphics[width=140mm,height=30mm]{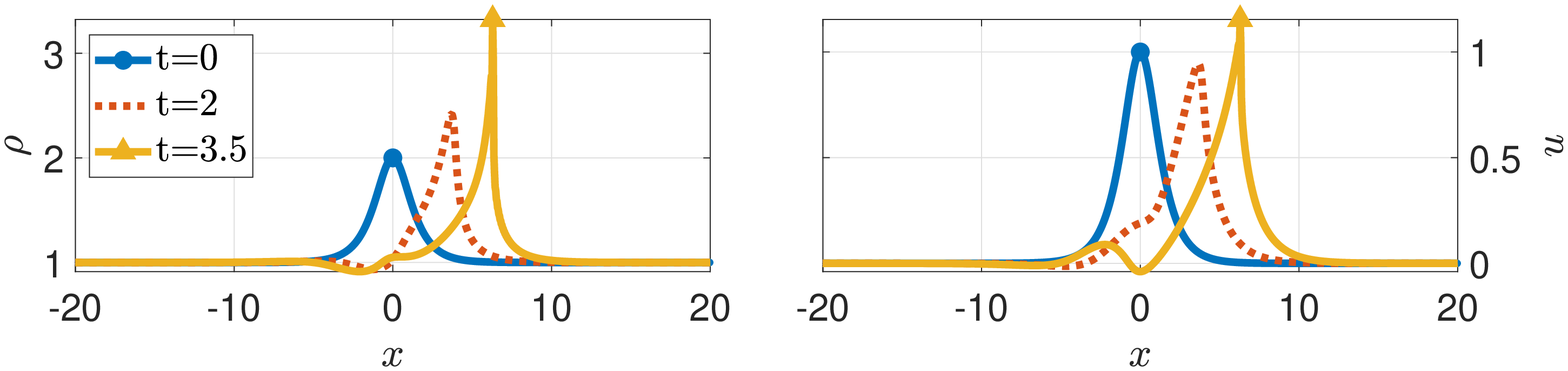} \\
 (b) $K=0.5$ \caption{Numerical solutions to the pressureless (above) and the isothermal (below) Euler-Poisson systems. The initial data are taken as  $\rho_0=1+\text{sech}(x)$ and $u_0=\text{sech}(x)$. The numerical solution of the pressureless case persists for a long time while that of the isothermal case blows up in a short time.} \label{Fig2}
\end{figure} 

\begin{table}[h]
\begin{tabular}{c|c|c|c}
&&$K=0$&$K=0.5$\\\hline\hline
\multirow{4}{*}{Comparison 1}&$\rho_0(x)$&\multicolumn{2}{c}{$1-0.7 \text{sech}(3x)$}\\\cline{2-4}
&$u_0(x)$&\multicolumn{2}{c}{$0$}\\\cline{2-4}
&Blow-up & O  & O \\\cline{2-4}
&Numerical results & Figure~\ref{f1} & Figure \ref{Fig3} \\\hline\hline
\multirow{4}{*}{Comparison 2}&$\rho_0(x)$&\multicolumn{2}{c}{$1-0.2 \text{sech}(x)$}\\\cline{2-4}
&$u_0(x)$&\multicolumn{2}{c}{$0$}\\\cline{2-4}
&Blow-up & X  & X \\\cline{2-4}
&Numerical results & Figure~\ref{Fig4}(a) & Figure \ref{Fig4}(b) \\\hline\hline
\multirow{4}{*}{Comparison 3}&$\rho_0(x)$&\multicolumn{2}{c}{$1+\text{sech}(x)$}\\\cline{2-4}
&$u_0(x)$&\multicolumn{2}{c}{$\text{sech}(x)$}\\\cline{2-4}
&Blow-up & X  & O \\\cline{2-4}
&Numerical results & Figure~\ref{Fig2}(a) & Figure \ref{Fig2}(b)
\end{tabular}
\caption{Comparisons between the pressureless model  and the isothermal model.}\label{Table2}
\end{table}

We shall discuss how $\partial_x\rho_0$ affect on the blow-up. In Figures \ref{Fig2}, we present numerical   solutions to the Euler-Poisson system both for the pressureless and the isothermal cases with the same initial data. For the pressureless case (Figure \ref{Fig2}.(a)), the initially localized (compressed) wave travels to the right, and it persists for a long time while the numerical solution of the isothermal case (Figure \ref{Fig2}.(b)) blows up in a short time.  
We conjecture for the pressureless case that $\partial_x\rho_0$ itself is not a critical component which makes the finite time blow-up occur. 
It becomes more plausible by the fact that for any given constant $M>0$, there is a smooth traveling solitary wave solution satisfying $\sup_{x\in \mathbb{R}}|\partial_x\rho(\cdot,t)|>M$ for all $t \geq 0$. In fact, as the speed $c\nearrow c_0$, where $c_0>1$ is some critical speed, the maximum value of the traveling solitary wave $\rho_c$ tends to infinity. (See Section 8.2 of \cite{BK} and Figure 6 of \cite{BK}). For the isothermal case, however, there is an upper bound (depending only on $K$) for $\sup_{c}\|\partial_x \rho_c(\cdot,t)\|_{L^\infty}$. Based on our numerical experiments, we find that the larger $K$ is, we get the smaller this upper bound, see Figure 6 of \cite{BK}.

We also remark that 
the numerical solution in Figure~\ref{Fig4}  seems to converge to a background constant state $(\rho, u, \phi)=(1,0,0)$. In the study of long time dynamics, a question of whether smooth solutions globally exist or  blow up in finite time arises naturally.
As mentioned in the introduction, the question if the global smooth solutions scatter to the constant state is conjectured by the dispersion relation of the linearized Euler-Poisson system.
%
The questions of global existence of smooth solutions and their long time behavior are intriguing and challenging.

\section{Appendix}

\subsection{Zeros of second-order differential inequality}\label{Appen2}
Following the proof of Theorem \ref{MainTheorem}, one obtains the following  lemma:
\begin{lemma}\label{Lem_DiffIneq}
Let $a$ and $b$ be positive constants. Suppose $w(t)$ satisfies 
\[
\ddot{w} + a w \leq b
\]
for all $t \geq T_0$ and $w(T_0)\geq 1$. If $a/2>b$ and 
\begin{equation}\label{Con11}
\frac{a|w(T_0)|^2}{2} -w(T_0)b+ \frac{|\dot{w}(T_0)|^2}{2} > 0,
\end{equation}
then $w(t)$ has a zero on the interval $(T_0,+\infty)$. 
\end{lemma}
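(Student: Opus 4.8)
The plan is to reproduce, in this abstract setting, the mechanism of \textit{Case A} in the proof of Theorem~\ref{MainTheorem}, preceded by a short reduction that disposes of the sign of $\dot w(T_0)$. Throughout I argue by contradiction, assuming $w(t)>0$ for all $t\geq T_0$ (if $w$ vanishes we are already done), and I exploit that the hypothesis $a/2>b$ forces the equilibrium level to satisfy $b/a<1/2<1\leq w(T_0)$, so that $w$ starts strictly above $b/a$.

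First I would reduce the case $\dot w(T_0)>0$ to the case $\dot w(T_0)\leq 0$. If $\dot w$ stayed positive for all $t\geq T_0$, then $w$ would be increasing, so $w(t)\geq w(T_0)\geq 1$ and $\ddot w\leq b-aw(t)\leq b-a<0$; integrating this constant negative upper bound drives $\dot w\to-\infty$, a contradiction. Hence $\dot w$ has a first zero at some $T_1>T_0$, where $w(T_1)\geq w(T_0)\geq 1$ and $\dot w(T_1)=0$. Because $w(T_1)\geq 1>b/a$, the energy quantity in \eqref{Con11} evaluated at $T_1$ equals $w(T_1)\bigl(\tfrac{a}{2}w(T_1)-b\bigr)>0$, so \eqref{Con11} is automatically restored at $T_1$; I may relabel $T_1$ as $T_0$ and assume henceforth $\dot w(T_0)\leq 0$.

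The heart of the argument is an energy estimate. Multiplying $\ddot w+aw\leq b$ by $\dot w\leq 0$ reverses the inequality, and integrating from $T_0$ gives $\tfrac12\dot w(t)^2\geq\tilde g(w(t))$ on any interval where $\dot w\leq 0$, where
\[
\tilde g(w):=-\tfrac{a}{2}\bigl(w^2-w(T_0)^2\bigr)+b\bigl(w-w(T_0)\bigr)+\tfrac12\dot w(T_0)^2
\]
is a downward parabola with vertex at $w=b/a$. Hypothesis \eqref{Con11} is exactly $\tilde g(0)>0$, and since $\tilde g$ increases on $(-\infty,b/a]$ we obtain $\tilde g\geq\tilde g(0)>0$ on $[0,b/a]$. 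Next I would show that $w$ descends to the level $b/a$: if $w>b/a$ for all $t$ then $\ddot w\leq b-aw<0$ makes $w$ concave and eventually drives it to $-\infty$, contradicting $w>b/a$; hence $w$ reaches $b/a$ at some minimal $T_\sharp>T_0$, with $\dot w<0$ on $(T_0,T_\sharp]$.

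The remaining dichotomy mirrors Cases (i)--(ii) of the theorem, and the genuinely delicate point lives in Case (ii). If $\dot w$ has a first zero $T_2>T_\sharp$, then $\dot w\leq 0$ throughout $[T_0,T_2]$, so $w(T_2)\in[0,b/a]$ by monotonicity, and the energy estimate at $T_2$ yields $0=\tfrac12\dot w(T_2)^2\geq\tilde g(w(T_2))\geq\tilde g(0)>0$ unless $w(T_2)\leq 0$, which forces a zero of $w$. If instead $\dot w<0$ for all $t>T_0$, there is no finite instant at which $\dot w$ vanishes, so I cannot evaluate the estimate at a convenient point; here I would argue that $w$ is decreasing and bounded below (under the contradiction hypothesis), so $w_\infty:=\lim_{t\to\infty}w(t)\in[0,b/a]$ exists, and that $\limsup_{t\to\infty}\dot w(t)=0$ (otherwise $\dot w$ stays bounded away from $0$ and $w\to-\infty$). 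Passing to the limit along a sequence $t_k$ with $\dot w(t_k)\to 0$ then gives $0\geq\tilde g(w_\infty)\geq\tilde g(0)>0$, the desired contradiction. The hard part will be precisely this asymptotic passage, since it is the only place where finite-time evaluation is unavailable and one must instead control the behavior of $\dot w$ as $t\to\infty$; everywhere else the estimate is applied at an explicit time.
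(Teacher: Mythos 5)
Your argument is correct and follows the paper's own proof (which is simply the proof of Theorem~\ref{MainTheorem}, \textit{Case A} and \textit{Case B}) essentially verbatim: the same energy function $\tilde g$ with $\tilde g(0)>0$ given by \eqref{Con11}, the same descent of $w$ to the level $b/a$, and the same dichotomy on whether $\dot w$ vanishes at a finite time, with the limiting argument along a sequence $t_k$ with $\dot w(t_k)\to 0$ handled exactly as in the paper's \textit{Case} (ii). The only cosmetic difference is that you dispose of the case $\dot w(T_0)>0$ as a reduction at the outset rather than as a separate case at the end.
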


The authors are not aware of any literature addressing the existence of zeros of second-order linear  differential inequality with the coefficient $a>0$ and  \textit{constant nonhomogeneous} term $b$.  We finish this subsection with some remarks regarding Lemma \ref{Lem_DiffIneq}.

\begin{remark}
\begin{enumerate}
\item For the case of the differential equation  $\ddot{w}+aw = b$, 
\begin{equation}\label{Con1}
\frac{a|w(0)|^2}{2} -w(0)b+ \frac{|\dot{w}(0)|^2}{2} \geq 0
\end{equation}
 is a necessary and sufficient condition in order for $w$ to have a zero on $[0,+\infty)$. 
 \item One needs the restriction $a/2>b$ (or $a/2\geq b$)  in Lemma \ref{Lem_DiffIneq}. If $a/2<b$, then the solution to  $\ddot{w} + aw = b$ with $w(0)=1$ and $\dot{w}(0)=0$ has no zero since \eqref{Con1} is not satisfied. For another example, we consider the equation
\begin{equation}\label{DiffIneq0}
\ddot{w} + aw = b - e^{-t}, \quad t \in [0,+\infty),
\end{equation}
where $a,b>0$ are constants. Since the general solution of \eqref{DiffIneq0} is 
\[
w(t) = \alpha \cos \sqrt{a} t + \beta \sin \sqrt{a} t + \frac{b}{a} - \frac{e^{-t}}{a+1},
\]
we have  
\[
w(0)=\alpha + \frac{b}{a} - \frac{1}{a+1}, \quad \dot{w}(0) = \sqrt{a}\beta + \frac{1}{a+1}.
\]
Since 
\[
\begin{split}
\min_{t \geq 0} w(t)
&  \geq \min_{t \geq 0} \left(\alpha \cos \sqrt{a} t + \beta \sin \sqrt{a} t \right) + \min_{t \geq 0} \left( \frac{b}{a} - \frac{e^{-t}}{a+1} \right) \\
&  = -\sqrt{\alpha^2+\beta^2} + \frac{b}{a} - \frac{1}{a+1},
\end{split}
\]
$w(t)$ has no zero on $[0,+\infty)$ provided that
\begin{equation}\label{Aux00}
-\sqrt{\alpha^2+\beta^2} + \frac{b}{a} - \frac{1}{a+1} > 0.
\end{equation}
We choose $b=1/3$ and $a>0$ sufficiently small such that
\begin{equation}\label{Aux0}
\frac{1}{2(a+1)^2} > b-\frac{a}{2} > \frac{a}{a+1}.
\end{equation}
For $w(0)=1$ and $\dot{w}(0)=\frac{1}{a+1}$, the first inequality of \eqref{Aux0} is equivalent to \eqref{Con11} and the second inequality of \eqref{Aux0} is equivalent to \eqref{Aux00}. On the other hand, $b>a/2$ holds.
\end{enumerate}
\end{remark} 

\subsection{Proof of inequality \eqref{EnergyBd}}\label{Appen1}
Lemma \ref{LemmaAppen} is derived from some elliptic estimates (see \cite{LLS}). The inequality \eqref{EnergyBd} follows from \eqref{PhiH1Bd2} and the definition of $H(t)$.
\begin{lemma}
\label{LemmaAppen}
For $\rho-1 \in L^\infty(\mathbb{R}) \cap L^2(\mathbb{R})$ satisfying $\inf_{x \in \mathbb{R}}\rho>0$ and $\lim_{|x|\to  \infty } \rho = 1$, let $\phi$ be the solution to the Poisson equation \eqref{EP_3}. Then, the following hold:
\begin{enumerate}
\item 
\begin{equation}\label{MaxPrincp}
 \kappa_-:=\inf_{x\in\mathbb{R}}\rho \leq e^\phi \leq \sup_{x\in\mathbb{R}}\rho  \quad \text{for all } x \in \mathbb{R},
\end{equation}
\item if $1> \kappa_- > 0$, then 
\begin{subequations}
\begin{align}
&  \int_{\mathbb{R}} |\phi_x|^2 + \frac{\kappa_0}{2}|\phi|^2\,dx \leq \frac{1}{2 \kappa_0}\int_{\mathbb{R}} |\rho-1|^2\,dx, \quad   \label{PhiH1Bd} \\
& \int_{\mathbb{R}} |\phi_x|^2 + (\phi-1)e^\phi +1  \,dx \leq \frac{1}{\kappa_0} \int_{\mathbb{R}} |\rho-1|^2\,dx, \label{PhiH1Bd2}
\end{align}
\end{subequations}
where $\kappa_0:= \frac{1-\kappa_-}{-\log \kappa_-}$.
\item if $\kappa_- \geq 1$, then \eqref{PhiH1Bd} and \eqref{PhiH1Bd2} hold with $\kappa_0 =1$.
\end{enumerate} 
\end{lemma}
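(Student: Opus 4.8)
The plan is to first pin down the pointwise range of $\phi$ via a maximum principle (part (1)), and then to feed this range into an energy identity obtained by testing the Poisson equation against $\phi$ (parts (2)--(3)). For part (1), I would rewrite \eqref{EP_3} as $\phi_{xx}=e^\phi-\rho$ and argue by a convexity/comparison principle on the whole line. To prove $e^\phi\le \sup_x\rho=:c$, set $\psi:=\phi-\log c$; the far-field condition $\rho\to1$ forces $c\ge1$ and $\psi\to-\log c\le 0$ as $|x|\to\infty$. On the open set $\{\psi>0\}$ one has $e^\phi=c\,e^\psi>c\ge\rho$, so $\psi_{xx}=e^\phi-\rho>0$ and $\psi$ is convex there; a bounded convex function whose far-field values are $\le0$ cannot be positive in the interior, giving $\psi\le0$. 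The lower bound $e^\phi\ge\kappa_-$ follows symmetrically by inspecting $\{\phi<\log\kappa_-\}$, where $\phi$ is concave. This confines $\phi$ to $[\log\kappa_-,\log c]$.

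For parts (2)--(3), I would test \eqref{EP_3} against $\phi$ and integrate by parts (the boundary terms vanish because $\phi,\phi_x\to0$, which is where the elliptic regularity of \cite{LLS} is used), obtaining
\begin{equation*}
\int_\mathbb{R}|\phi_x|^2\,dx+\int_\mathbb{R}(e^\phi-1)\phi\,dx=\int_\mathbb{R}(\rho-1)\phi\,dx.
\end{equation*}
The elementary fact driving the estimate is that $\tfrac{e^\phi-1}{\phi}$ is increasing in $\phi$, so on the range $\phi\ge\log\kappa_-$ from part (1) it is bounded below by its value at the left endpoint, namely $\tfrac{\kappa_--1}{\log\kappa_-}=\kappa_0$; hence $(e^\phi-1)\phi\ge\kappa_0\phi^2$ pointwise. (When $\kappa_-\ge1$ one has $\phi\ge0$ and the ratio is $\ge1$, which is exactly the choice $\kappa_0=1$ in part (3).) Substituting this and applying Young's inequality $\int(\rho-1)\phi\le\tfrac{1}{2\kappa_0}\int(\rho-1)^2+\tfrac{\kappa_0}{2}\int\phi^2$, then absorbing the $\phi^2$ term, yields \eqref{PhiH1Bd}.

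For \eqref{PhiH1Bd2}, I would exploit the algebraic identity
\begin{equation*}
\big[(\phi-1)e^\phi+1\big]-(e^\phi-1)\phi=-\big(e^\phi-1-\phi\big)\le0,
\end{equation*}
which, added to the tested identity above, gives $\int|\phi_x|^2+(\phi-1)e^\phi+1\,dx\le\int(\rho-1)\phi\,dx$. I would then close by Cauchy--Schwarz together with the $L^2$ bound $\|\phi\|_{L^2}\le\kappa_0^{-1}\|\rho-1\|_{L^2}$ extracted from \eqref{PhiH1Bd}, producing the stated constant $\tfrac{1}{\kappa_0}\int(\rho-1)^2\,dx$.

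The main obstacle is the one genuinely delicate point: making the maximum principle rigorous on the non-compact line, i.e.\ justifying the convexity argument when the extremum is only approached at infinity rather than attained, and confirming the sign structure that traps $\phi$ inside $[\log\kappa_-,\log\sup_x\rho]$. Once part (1) is secured, everything downstream is elementary, resting only on the monotonicity of $(e^\phi-1)/\phi$ and the convexity inequality $e^\phi\ge1+\phi$.
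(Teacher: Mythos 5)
Your proof is correct, and for parts (2) and (3) it follows essentially the paper's route: both arguments test \eqref{EP_3} against $\phi$ to get $\int|\phi_x|^2+(e^\phi-1)\phi\,dx=\int(\rho-1)\phi\,dx$, both reduce \eqref{PhiH1Bd} to the pointwise inequality $(e^\phi-1)\phi\ge\kappa_0\phi^2$ on the range $\phi\ge\log\kappa_-$ supplied by \eqref{MaxPrincp} (you phrase this via the monotonicity of $(e^\phi-1)/\phi$, the paper via the monotonicity of $(1-e^{-x})/x$ in \eqref{AppendEq2}; these are the same fact, and both evaluate to $\kappa_0$ at the left endpoint), and both derive \eqref{PhiH1Bd2} from the identity $(\phi-1)e^\phi+1=(e^\phi-1)\phi-(e^\phi-1-\phi)$ combined with the already-established $L^2$ control of $\phi$ (you close with Cauchy--Schwarz, the paper with Young's inequality; the constant $1/\kappa_0$ comes out the same either way). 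The only genuine divergence is part (1): the paper dispatches the maximum principle \eqref{MaxPrincp} in one sentence by citing Stampacchia's truncation method from \cite{LLS}, whereas you give a self-contained convexity/comparison argument on the connected components of $\{\phi>\log\sup_x\rho\}$ and $\{\phi<\log\kappa_-\}$. Your argument does go through on the whole line -- on an unbounded component one combines boundedness of $\phi$ with the chord inequality to conclude that a convex function with nonpositive limiting values is nonpositive -- and it has the merit of being elementary and explicit; the truncation method is more robust (it needs less a priori information on $\phi$ and extends to higher dimensions), which is presumably why the paper defers to it.
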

\begin{proof}
The maximum principle \eqref{MaxPrincp} can be proved by Stampacchia's truncation method. Referring to \cite{LLS}, we omit the details. Multiplying the Poisson equation \eqref{EP_3} by $\phi$ and  integrating by parts, we have
\begin{equation}\label{AppendE3}
\int (\rho-1)\phi \,dx = \int |\phi_x|^2 + (e^\phi-1)\phi \,dx.
\end{equation}


We prove the second statement. We first prove \eqref{PhiH1Bd}. Letting $\kappa:= - \log \kappa_->0$,
we claim that 
\begin{equation}\label{AppendEq2}
(e^\phi-1)\phi \geq \frac{1-e^{-\kappa}}{\kappa}\phi^2 \ \ \text{ for }  \phi \geq  -\kappa. 
\end{equation}
For $0>\phi \geq -\kappa$, we have 
\begin{equation}\label{AppendEq1}
\frac{1-e^\phi}{-\phi} \geq \frac{1-e^{-\kappa}}{\kappa} > 0
\end{equation}
since the mapping  $x \mapsto \frac{1-e^{-x}}{x}$ strictly decreases on $x>0$. Multiplying \eqref{AppendEq1} by $\phi^2$, we get \eqref{AppendEq2} for $0>\phi \geq -\kappa$. On the other hand, since $e^\phi - 1 \geq \phi$ and $1>\frac{1-e^{-x}}{x}$ for $x>0$, we obtain \eqref{AppendEq2} for $\phi \geq 0$. This proves the
 claim. Then \eqref{PhiH1Bd} follows from \eqref{AppendE3} and \eqref{AppendEq2} by applying Young's inequality.

Next we prove \eqref{PhiH1Bd2}. From \eqref{AppendE3} and the fact that $e^\phi - 1 -\phi \geq 0$, we have 
\begin{equation}\label{Equ1}
\begin{split}
\int (\rho-1)\phi \,dx
& = \int |\phi_x|^2 + (\phi-1)e^\phi +1 + (e^\phi-1-\phi) \,dx \\
& \geq \int |\phi_x|^2 + (\phi-1)e^\phi +1  \,dx.
\end{split}
\end{equation}
 Using Young's inequality, we obtain from \eqref{PhiH1Bd} that
\begin{equation}\label{Equ2}
\begin{split}
\int (\rho-1)\phi\,dx 
& \leq \frac{\kappa_0}{2}\int |\phi|^2 \,dx + \frac{1}{2\kappa_0}\int|\rho-1|^2\,dx \\
& \leq  \frac{1}{\kappa_0} \int |\rho-1|^2\,dx.
\end{split}
\end{equation}
Now \eqref{PhiH1Bd2} follows from \eqref{Equ1} and \eqref{Equ2}.
The last statement can be easily checked since $\phi \geq 0$ if $\kappa_- \geq 1$.
\end{proof}

\subsection*{Acknowledgments.}
B.K. was supported by Basic Science Research Program through the National Research Foundation of Korea (NRF) funded by the Ministry of science, ICT and future planning (NRF-2020R1A2C1A01009184). 
The authors thank Shih-Hsin Chen and Yung-Hsiang Huang for suggesting the example \eqref{DiffIneq0}.

\subsection*{Conflict of Interest}
The authors declare that they have no conflict of interest.

\subsection*{Availability of Data}
The data supporting the findings of this study are available within the article.

 \end{document}